\documentclass{amsart} 
\usepackage{amsmath,amsthm}
\usepackage{amssymb,amscd,latexsym}
\usepackage{boxedminipage}
\usepackage{eepic}
\usepackage{epic}
\usepackage{wrapfig}
\usepackage[dvips]{graphicx}
%\usepackage{graphicx}
%\usepackage[dvips]{graphicx}
%\usepackage{amsmath}

%\textwidth=15cm
%\textheight=24cm
%\hoffset=-2.7cm
%\voffset=-2.8cm

%\theoremstyle{definition} %Editor
%\theoremstyle{plain}
%\newtheorem*{prob}{Problem}
%\newtheorem*{obser}{Observation}
%\newtheorem*{theo}{Theorem}
%\newtheorem*{def-theo}{Definition-Theorem}
%\newtheorem*{fact}{Fact}
%\newtheorem*{corollary}{Corollary} %Editor
%\newtheorem*{proposition}{Proposition}

%\theoremstyle{definition}
%\newtheorem*{example}{Example}
%\newtheorem*{definition}{Definition} %Editor

%\newtheorem{theorem}{theorem}[section]

%\theoremstyle{break}
%\newtheorem{step}{Step}
%%%%%%%%%%%%%%%%%%%%%%%%%%%%%%%%%%%%%%%%%%%%%%%%%%%%%%
%%%%%%%%%%%%%%%%%%%%%%%%%%%%%%%%%%%%%%%%%%%%%%%%%%%%%%%%%%%%%%%%%%%%%%%%%%%%%%%%%%%%%%%%%%%%%%%%%%%%%%%%%%%%%%%%%%%%%%%%%%%%%%%%%%%%%%%%%%%%%%%%%%%%%%%%%%%%%%%%%%%%%%%%%%%%%%%%%%%%%%%%%%%%%%%%%%%%%%%%%%%%%%%%%%%%%%

\newcommand{\erase}[1]{}
%\theoremstyle{remark}
%\newtheorem{remark}{Remark} 
%\newnumbered{lemma}[theorem]{Lemma} 
\newtheorem{theorem}{Theorem}[section]

\newtheorem{proposition}[theorem]{Proposition}
%\newtheorem{addendum}[theorem]{Addendum}
%\newtheorem{conj}[thm]{Conjecture}
%\newtheorem{assum}[thm]{Assumption}
%\theoremstyle{definition}

%\newnumbered{corollary}{Corollary}
%\newtheorem{definition}[thm]{Definition}

%\newtheorem{exdefn}[thm]{Example-Definition}
%% \newtheorem{prob}[thm]{Problem} 
\newtheorem{remark}[theorem]{Remark}
\newtheorem{fact}[theorem]{Fact}
\newtheorem{lem}[theorem]{Lemma}
\numberwithin{equation}{section}
\newcommand{\bp}{\begin{pmatrix}}
\newcommand{\ep}{\end{pmatrix}}
\newcommand{\bps}{\begin{smallmatrix}}
\newcommand{\eps}{\end{smallmatrix}}

\def\R{{\mathbb R}}
\def\Z{{\mathbb Z}}

\def \0{{\bf 0}}
\def \1{{\bf 1}}

\def \rank{\mathrm{rank}}

\def \mf#1#2#3#4{
\xymatrix{{#1}\  \ar@<0.4ex>[r]^{{#2}} & \ {#4}
\ar@<0.4ex>[l]^{{#3}}
}
}

\def \mfs#1#2#3#4{\!
\xymatrix@C=1.5em{{#1} \! \ar@<0.2ex>[r]^{{#2}} & \! {#4}
\ar@<0.2ex>[l]^{{#3}}
}
\!}

\def \mfl#1#2#3#4{
\xymatrix@C=2.6em{{#1}\  \ar@<0.4ex>[r]^{{#2}} &\  {#4}
\ar@<0.2ex>[l]^{{#3}}
}
}

\def \mfss#1#2#3#4{\!
\xymatrix@C=1.5em{{#1} \ar@<0.3ex>[r]^{{#2}} & {#4}
\ar@<0.3ex>[l]^{{#3}}
}
\!}

%\def \hk#1{\emph{$\star$\ #1}}

%%%%%%%%%%%%%%%%%%%%%%%%%%%%%%%%%%%%%%%%%%
% COLOR

%%%%%%%%%%%%%%%%%%%%%%%%%%%%%%%%%%%%%%%%%%%%%%%%%%%%%%%%%%%%%%%%%%%%%%%%%%%%%%%%%%%%%%%%%%%%%%%%%%%%%%%%%%%%%%%%%%%%%

%\newcommand{\deeq}{\mathbin{\hbox{$=$ \lower 1.7pt\rlap{\hskip -8.5pt .}}}} %Editor
% This is the full title of the paper
% Use lowercase letters in title except for proper names
% Avoid equations in title if possible
% Do not use the \thanks{} command; use \extraline{} instead (see below).

\begin{document}
\title{Zero loci of 
Skew-growth functions\\ 
 for dual Artin monoids.} 

\author{Tadashi Ishibe and  Kyoji Saito}

%\hmjlogo{}{}{}{}
%\address{Department of Mathematical Sciences, \\
%University of Tokyo, \\
%3-8-1 Komaba Meguro-ku Tokyo, 153-8914 Japan
%}
  
%\date{}

%\vspace{ -0.6cm} Editor

% {\renewcommand{\baselinestretch}{0.9} Editor

%\email{tishibe@ms.u-tokyo.ac.jp}
%  \vspace{0.3cm}

%\subjclass{20F05}

%\keywords{monoid, fundamental group, the word problem, the conjugacy problem}

%2000 Mathematics Subject Classification Numbers:  20F05 \\
%key words and phrases:  monoid, fundamental group, the word problem, the conjugacy problem

\maketitle

\begin{abstract} 
We show that the skew-growth function of a dual Artin monoid of finite type $P$ has exactly $\rank(P)=:l$ simple real zeros on the interval $(0, 1]$.  The proofs for types $A_l$ and $B_l$ are based on an unexpected fact that the skew-growth functions, up to a trivial factor, are expressed by Jacobi polynomials due to a Rodrigues type formula in the theory of orthogonal polynomials.  The skew-growth functions for type $D_l$ also satisfy Rodrigues type formulae, but the relation with Jacobi polynomials is not straightforward, and the proof is intricate. We show that the smallest root converges to zero as the rank $l$ of all the above types tend to infinity.
\end{abstract}

\section{Introduction}
We study the zero loci of the {\it skew-growth function} (\cite{[Sa2]})\footnote
{The study of the skew-growth function of a monoid and its zero loci is motivated by the study of the {\it partition functions associated with the monoid}, since the partition functions are given by certain residue formula at the zero loci of the skew-growth function (\cite{[Sa3]} \S11 Th.6 and \S12).
}
of a {\it dual Artin monoid} of finite type (\cite{[Be]})\footnote
{Dual Artin monoids were introduced by D. Bessis (\cite{[Be]}, c.f.\ \cite{[B-K-L]}) under the name {\it dual braid monoids}. For more detailed explanations, see the following part of \S1 and Appendix III.}. 
The skew-growth function is identified with the generating function of M\"obius invariants (called the {\it characteristic polynomial}) of the {\it lattice of non-crossing partitions} (\cite{[K],[Be],[B-W]}), and is further shown by several authors (\cite{[At],[A-T],[B-W],[Ch]}) to be equal to the generating function of dimensions of cones of the positive part of the {\it cluster fan} of Fomin-Zelevinsky (\cite{[F-Z1]}). We observe that this combinatorially defined function shows an unexpected strong connection with orthogonal polynomials. With the help of them, our goal is to show that the roots of the skew-growth function are simple and lying in the interval $(0, 1]$, and that  the smallest root converges to zero as the rank of the type tends to infinity. 

 Let us explain the contents. 
Recall (\cite{[B-S]}) that an Artin group  $G_M$ (resp.\ an Artin monoid $G_M^+$) associated with  a Coxeter matrix $M=(m_{ij})_{i,j\in I}$ is a group (resp.\ monoid) generated by letters $i$ ($i\!\in\! I$) and defined by the Artin braid relations:
 \begin{equation}
 \label{Artin relation}
\qquad \qquad \qquad 
a_ia_ja_i\cdots\ =\  a_ja_ia_j\cdots \qquad (i,j\in I)
\end{equation}
where both sides are words of alternating sequences of  
letters $a_i$ and $a_j$ of the same length $m_{ij}=m_{ji}\in\Z_{>0}$ with the initial letters $a_i$ and $a_j$, respectively. The natural morphism $G_M^+ \to G_M$ is shown to be injective (in  particular, $G_M^+$ is cancellative) so that the Artin monoid is regarded as a submonoid of the Artin group. 
Requiring more relations $a_ia_i=1$ ($i\in I$), we obtain the Coxeter group $\overline{G}_M$ and the quotient morphism:  
$\pi: G_M \rightarrow   \overline{G}_M$ (for short, we shall denote $\pi(g)$ by $\overline g$ for $g\in G_M$).
We call ${\bf S}:=\{a_i\mid  i\in I\}$ the simple generator system of the Artin group and the Artin monoid, and $S:=\{\overline {a}_i\mid  i\in I\}$ that of the Coxeter group. The number $l:=\#I=\#{\bf S}=\#S$ is called the {\it rank}. % the pair $(\overline{G}_M,S)$ is refered to a Coxeter system \cite{[Bo]}.
Since the relations (\ref{Artin relation}) are homogeneous with respect to the word length,  we define
the degree homomorphism:
\begin{equation}
\label{degree map}
\deg \ : \ \ G^+_M \rightarrow \Z_{\ge0}  \qquad \& \qquad  G_M \rightarrow \Z 
\end{equation}
by assigning the word length to each equivalence class of $G^+_M$, and extending it additively to $G_M$. An element of $G^+_M$ is of degree 0 if and only if it is the unit. 

%             1.  Artin monoid
%
%An Artin monoid of finite type has the properties: i) it is a lattice in the sense that it admits right and left least common multiple for any finite number of elements (it is sufficent to show the existence of lcm only for sets of the letters), ii) the left and the right lcm of simple generators coincide to each other, called the fundamental element and denoted by $\Delta$, iii)  $\Delta$ is balanced, i.e.\ an element of $G_P^+$ is a left divisor of $\Delta$ if and only if it is a right divisor of $\Delta$ (\cite{[G],[B-S],[D]}). 

In the present note, we study only the cases when the Coxeter group is of finite of type  $P$ for $P\in${\small $\{l\ (l\!\ge\!1), B_l\ (l\! \ge\!2), D_l\ (l\! \ge\! 4), E_l\ (l=6,7,8), F_4, G_2, H_3, H_4,$ $I_2(p)\ (p\ge3)\}$}. Then, we shall denote the Coxeter group and the Artin group (resp.\ monoid)  by $\overline{G}_P$ and also by $G_P$ (resp.\ $G^+_P$), and call them of type $P$. 

Depending on a choice of a Coxeter element $c_P:=\prod_{i\in I}a_i$, Artin groups were also studied using larger generating set $\bf T\supset \bf S$ 
\begin{equation}
\label{T}
{\bf T}:=\cup_{m\in \Z} (c_P)^m{\bf S}(c_P)^{-m},
\end{equation} 
by David Bessis (\cite{[Be]}, c.f.\ \cite{[B-K-L]} for type $A_l$ and \cite{[Br]} for three generators case). Actually, the projection $\pi$ induces a bijection from ${\bf T}$  to the set $T$ of all reflections (i.e.\ elements which are conjugate to elements in $S$) in the Coxeter group $\overline{G}_P$. It was shown that the generator system ${\bf T}$ satisfy
% , beside  1) Artin braid relations on $\bf S$ and 2) $c$-conjugacy relations: $uc^m=c^mt$ if $\bar{u}\bar{c}^m=\bar{c}^m\bar{t}$ for $u, t\in \bf T$ and $m\in\Z$,  3) 
quadratic relations
\begin{equation}
\label{dual relation}
     r\cdot s =  s^*\cdot r
 \end{equation}
 for non-crossing pairs $r, s\in \bf T$ and $s^*\in \bf T$ with the relation $\bar{r}\cdot \bar{s} =  \bar{s^*}\cdot \bar{r}$ (see Appendx III), 
 %here $r,s\in{\bf T}$ is a non-crossing pair if $\bar{r}\bar{s}\le_T \overline{c}$ and then we set $\bar{s}^*:=\bar{r}\bar{s}\bar{r}^{-1}\in T$, see \S2), 
 and that the monoid generated by {\bf T} and defined by the relation \eqref{dual relation}:
 \begin{equation}
 \label{dual Artin}
 G^{dual +}_{P} \  := \   \langle \ \bf T \ \mid  \ \eqref{dual relation} \ \mathrm{for\ all\ non crossing\ pairs} \ \rangle_{monoid}
 \end{equation}
 is naturally embedded into the Artin group $G_P$. D.\ Bessis called $G^{dual +}_{P}$ a {\it dual braid monoid}. However, since we want to use the terminology "braid" only for type $A_l$ ($l\in\Z_{\ge1}$) when the Artin group $G_{A_l}$ is a braid group $B(l+1)$, we modify the terminology in the present note, and call the monoid $G^{dual +}_{P}$ a {\it dual Artin monoid} of type $P$, and regard it as a submonoid of the Artin group $G_P$.
% Namely, let $c$ be the Coxeter element of $\overline{G}_P$. Then, $c$ is balanced and divisible by all elements of $T$ with respect to the ordering $\le_T$ on $\overline{G}_P$ defined by the word length by the generators $T$. He showed that the monoid $G^{dual +}_{P}$ is given by the "monoidification" $M([1,c]_T)$ of the lattice  $[1,c]_T:=$ the set of elements of $\overline{G}_P$ between the unit element 1 and $c$ with respect to $\le_T$ (see \S2 for details).  \footnote{
%  Similar to this, the Artin monoid $G^+_P$ is the monoidification $M([1,\Delta])$ of the lattice $[1,\Delta]$ with respect to the order $\le$ associated with the simple generator system (\cite{[B-S]}\cite{[D]}).
% }
%In the present paper, we shall call the monoid genrated by $\bf T$ and satisfying relations 1), 2) and 3) the {\it dual Artin monoid} of type $P$ and denote by $G^{dual +}_{P}$.\footnote
%{In [Bessis], Bessis called this monoid a {\it dual braid monoid}. However, since we want to use that terminology only when the type $P$ is $A_l$ ($l\in\Z_{\ge1}$), we slightly modified his terminology.}
% It is shown in [Bessis] that $G^{dual +}_{P}$ is a cancellative monoid, which is injectively embedded in $G_P$. 
%
By definition \eqref{T}, we have $\deg(t)=1$ for all $t\in \bf T$, and hence the restriction of the degree morphism \eqref{degree map} induces a morphism $\deg : G^{dual +}_{P} \rightarrow \Z_{\ge0}$ such that an element of $G^{dual +}_P$ is of degree 0 if and only if it is the unit element.

For any cancellative monoid $M$ with a degree homomorphism $\deg\! :\! M\to\! \R_{\ge0}$ satisfying an ascending chain condition, we  introduced in \cite{[Sa2]} the concept of a {\it skew-growth function} $N_{M,\deg}(t)$ in the Novikov ring $\Z[[t^\mathrm{range(\deg)}]]$   
(recall Footnote 1 for its motivation). In particular, if the monoid $M$ is a lattice (i.e. $M$ admits a left or right least common multiple for any finite subsets of $M$), the skew-growth function is given by a simple formula:
\begin{equation}
\label{skew-growth}
N_{M,\deg}(t) = \sum_{J\subset I}(-1)^{\# J} t^{\deg(lcm_l(J))}
\end{equation}
where the index $J$ runs over all finite subsets of $I:=M\setminus\{1,M\cdot M\}$ (=the set of minimal generators of $M$). 
It is easy to see $(1-t)\mid N_{M,\deg}(t)$, and we set the {\it reduced skew-growth function}: 
$ %\[
\widehat{N}_{M,\deg}(t):=N_{M,\deg}(t)/(1-t)
$.%\]

% and we drop the subscript "$\deg$" from the notation.

Actually, the poset structures on Artin monoids and dual Artin monoids w.r.t. left (or right) division relations,  are known  (\cite{[B-S]}, \cite{[Be]}) to be lattices so that the formula \eqref{skew-growth} is valid, where the minimal generator system $I$ is $\bf S$ or $\bf T$, respectively. Furthermore, these monoids are constructed from a certain 
 finite pre-monoid $[1,\Delta]_S$ and $[1, c]_T$ in the Coxeter group (Bessis \cite{[Be]}, see Footnote 3 and Appendix III),  
 %by regarding them as a generating set with relations for the monoids, respectively
  where the poset structure on $[1, c]_T$ is called the {\it lattice of non-crossing partitions of type} $P$ (\cite{[Be]}, \cite{[B-W]}). Then, we observe that the characteristic polynomial (i.e.\ the generating function of the M\"obius function of the finite lattices  $[1,\Delta]_S$ and $[1, c]_T$ (\cite{[St]} (3.39)) coincides with the skew-growth function \eqref{skew-growth} for Artin and dual Artin monoids, respectively (\cite{[C-F],[Sa2]}, see Appendix III, Fact \ref{skew=character}).  
% Since the degree morphism plays no-longer a role, we shall drop it from the notation. 
% For short,  we shall denote them by  $N_{G^+_P}(t)$ and $N_{G^{dual +}_{P}}(t)$, respectively.  
%
% In fact, $N_{G^+_P(t)}$ for Artin monoid cases were calculated in \cite{[A-N]} and \cite{[Sa1]}, and

Explicit expressions of the skew-growth function for the Artin monoid $G_P^+$ are given by calculating the word length of fundamental elements for all sub-diagrams of Dynkin diagrams of type $P$  (\cite{[A-N]},\cite{[Sa1]}).  To obtain an explicit description of the skew-growth function for the dual Artin monoid $G_P^{dual+}$, we need more considerations: Chapoton (\cite{[Ch]}) conjectured a transformation formula from the generating function of two variables of M\"obius invariants of pairs of elements of the lattice $[1,c_P]_T$ to the two variable generating function of cone-counting of the cluster fan $\Delta(P)$ (Fomin-Zelevinsky \cite{[F-Z1]}). In particular, the specialization of the Chapoton formula to one variable is 
the identity between the  generating function of M\"obius numbers of the lattice $[1,c_P]_T$ with the generating function of the cones of the positive part $\Delta_+(P)$ of the fan, and  was already shown by Chapoton ([ibid])  (the general formula was proven by Athanasiadis, Brady and Watt \cite{[B-W]} \cite{[A-B-W]} depending on each of the cases separately, and a case free proof is given by Athanasiadis \cite{[At]}). Thus, the $k$th coefficient of the skew-growth function $N_{{G_P^{dual+}},\deg}(t)$, up to sign,  is equal to the number of $k$-dimensional cones of the cluster fan $\Delta_+(P)$ of type $P$.
In {\bf Tables A} and {\bf B} of Appendix I, we list explicit formulae of skew-growth functions of dual Artin monoids of finite type.

\medskip
We are now interested in the zero loci of the skew-growth functions $N_{G_P^+}(t)$ and $N_{G_P^{dual+}}(t)$ of finite type $P$.  For the Artin monoid cases, suggested by some numerical experiments,  the following 1, 2 and 3 were conjectured  in \cite{[Sa1]}. 

\medskip
\noindent
1.\ $\widehat{N}_{G^+_P}(t) = N_{G^+_P}(t)/(1\!-\!t)$ is an irreducible polynomial over $\Z$. 

\noindent
2.\ There are $\rank(P)$  simple roots of $N_{G^+_P}(t)$ on the interval $(0, 1]$. 

\noindent
3.\ The smallest real root is strictly less than the absolute values of any other roots. 

\medskip
\noindent
Furthermore, the smallest real root of $N_{G^+_P}(t)$ seems to be decreasing and  convergent to a  constant $0.30924...$ as the rank $l$ tends to infinity.  
% These conjectures 1. and 2. are still open and a proof of 3. is going to appear in  [***].

\smallskip
In analogy with 1, 2 and 3, and also inspired by some numerical experiments for the dual Artin monoids (see Remark 1.2 and Appendix II for the figures of the zero loci of the functions of types $A_{20},\ B_{20},\ D_{20}$ and $E_8$), we conjecture the following. 

\medskip
\noindent
{\bf Conjecture 1.}  $\widehat{N}_{G^{dual +}_{P}}(t) = N_{G^{dual +}_{P}}(t)/(1-t)$ is an irreducible polynomial over $\Z$, up to the trivial factor $1-2t$ for the types $A_l$ ($l$: even) and $D_4$ (see {\bf Fact}s at the end of \S3 and \S5 for the factor $1-2t$).

\smallskip
\noindent
{\bf Conjecture 2.}  $N_{G^{dual +}_{P}}(t)$ has  $l=\rank(P)% \ (=\deg(N_{G^{dual +}_{P}})
$ simple real roots on the interval $(0, 1]$, including a simple root at $t=1$. 

\smallskip
\noindent
{\bf Conjecture 3.}  The smallest root of $N_{G^{dual +}_{P}}(t)$ decreases and converges to $0$ 
as the rank $l$ tends to infinity for the infinite series of type $A_l$, $B_l$ and $D_l$. 

%\medskip
\begin{remark}
By definition, the degree of $N_{G^{dual +}_{P}}(t)$ is equal to the word length of the Coxeter element $c_P=\#{\bf S}=\rank(P)=l$. Therefore,  Conjecture 2. implies that all roots of $N_{G^{dual +}_{P}}(t)$ are on the interval $(0, 1]$. 
\end{remark}
\begin{remark}
Conjecture 1. is approved  for types $A_l$   ($1\le l \le 30$), 
$B_l$  ($2\le l \le 30$),  $D_l$   ($4\le l \le 30$), $E_6$, $E_7$, $E_8$, $F_4$, $G_2$, $H_3$, $H_4$ and $I_2(p)$ ($p\ge 3$)  by using the software package Mathematica on the Table A and B in Appendix I.  
\end{remark}

The goal of the present note is to give affirmative answers to Conjectures 2 and 3.  The proof is based on the explicit expressions of the skew-growth functions in Appendix I, and is divided into three groups: i) two infinite series of types $A_l$ and $B_l$, ii) the infinite series of type $D_l$, and iii) the remaining exceptional types $E_6,\ E_7,\ E_8,\ F_4,\ G_2$ and non-crystallographic types $H_3,\  H_4$ and $I_2(p)$.  

Let us give a review of the proof.  The third group iii) consists only of types of bounded ranks  so that Conjecture 3 does have no meaning, and the Conjecture 1 and 2 are verified by direct calculations for each type. Therefore, our main task is to manage the infinite series with the growing rank $l$ in groups i) and ii).  

The key fact to do this is a rather mysterious expression for the skew-growth functions, which we shall call {\it Rodrigues type formula} in analogy with the Rodrigues formula in orthogonal polynomial theory (\cite{[Sz]}). 
Namely, in \S2,  we show that the skew-growth functions of rank $l$ is expressed (up to a simple constant factor or linear combinations) by a polynomial of the form 
$\big(\frac{d}{dt}\big)^{l+\varepsilon_1}\big[t^{l+\varepsilon_2}(1-t)^{l+\varepsilon_3}\big]$ where $\varepsilon_i$ are some small and fixed ``fluctuation" numbers. Its proof is elementary, however the meaning of the expression, in particular, of the fluctuation numbers, is still unclear. 

Then, we show further in \S3 that the Rodrigues type formulae lead to recursion relations of the series of the  skew-growth functions. Namely, we obtain 3-term recursion relations for the series of types $A_l$ and $B_l$, respectively, and 4-term recurrence relations for the series of type $D_l$.  In case of types $A_l$ and $B_l$, we may reduce the proof of recurrence to that of corresponding Jacobi polynomials, but we don't know whether such type of reduction is possible for type $D_l$ or not.

%The proof of Conjectures 2 and 3 starts after \S3. 
Proofs of Conjecture 2 for types $A_l$ and $B_l$ are given in \S4. A direct proof is that Jacobi polynomial expressions of the skew-growth functions for the types in \S2 imply automatically that Conjecture 2 is true. An alternative approach is that the recurrence relations in \S3 show easily that  the series of the skew-growth functions form a Sturm sequence in  the sense of \cite{[T]} Theorem 4.3.
Proof of Conjecture 2 for type $D_l$ is more complicated and is given in \S5, where we essentially use the Rodrigues type formula in \S2  but no explicit use of Jacobi polynomials. We do not know whether there is a proof to reduce the conjecture to Jacobi polynomials.

% Namely, in \S2, we recall the description of the monoid $G^{dual +}_{P}$ due to \cite{[Be]}, \cite{[B-W]}. Then, as we described above, the calculation of the skew-growth function $N_{G^{dual +}_{P},\deg_T}(t)$ is reduced to the calculation of the Moebius function of the lattice $[1,c]$.  Then the Moebius function of the lattice is related to the generalized associahedron [Athanasiadiadis], and [Chapoton]. The explicit polynomial expression for the infinite series of type $A_l$, $B_l$, $C_l$ are given by [****] as induction formulae. Then, from the induction formulae, we obtain recursion relations among the skew-growth functions for $A_l$-series,  $B_l=C_l$-series and $D_l$-series in \S3. On the other hand, the skew-growth fuctions for the exceptional types are directly calculated by a use of generalized associahdron (Chapoton V. Reiner, Athanasiadis-T), which we recall in Table A in \S4 ([A][...])

In \S6, we prove Conjecture 3 affirmatively, where the relationship of the skew-growth functions with Jacobi polynomials given in \S2 plays the key role. 
For series of types $A_l$ and $B_l$, we have two proofs again. A direct proof is based on the general fact that the zeros of a series of Jacobi polynomials is dense in the interval (0,1), implying Conjecture 3. Another alternative proof is based on a  sharp approximation of the distribution of all roots (the ``density" is proportional to {\small $1/\sqrt{t(1-t)}$}) of the skew-growth function of type $A_l$ and $B_l$ and  is given by sandwiching the roots by the roots of Legendrian polynomials.  The proof for the series $D_l$ uses again Rodrigues type formula in \S2, where the functions of type  $D_l$ are expressed by those of type $B_l$ so that the roots of type $D_l$ are sandwiched by the roots of type $B_l$. 
% We apply the Sturm meathod: in case of infinite series by the sequence itself, and in case of excptional types, by the derivatives of the polynomials.

Appendix I gives Tables A and B of skew-growth-functions of dual Artin monoids of finite type.
Appendix II exhibits figures of the zeros of the skew-growth functions for dual Artin monoids of type ${20}$, $B_{20}$, $D_{20}$ and $E_8$. 
In Appendix III, we recall Bessis's study of dual Artin monoids, and then identify the skew-growth function of them with the characteristic polynomial of the non-crossing partition lattice. 
%  Finaly, we introduce $T$-square free elements in  monoid $G^{dual +}_{P}$ and show that they are bijective to $\overline{G}_P$.  

% \medskip
\begin{remark} {\rm Finally, let us give a rather vague philosophical
 remark on the present study.  Recall that our starting point was the skew-growth function of a dual Artin monoid.
 % or, equivalently, the characteristic function of non-crossing partition lattice, 
That is,  the starting function is given combinatorially by the enumeration of the dimensions of cones of a cluster fan. Then, it turns out to be (unexpectedly) expressed by certain analytic objects such as Jacobi orthogonal polynomials. This picture resembles % the Mellin transform from modular forms to zeta functions, or more generally, 
mirror symmetry: some enumeration of BPS states are mirror to some Hodge structure or to period integral theory. Both have the pattern that an enumeration of some combinatorial objects is transformed to a function of analytic nature. %(actually, in certain combinatorial setting, the skew-growth function can be understood as the inverse of zeta functions (\cite{[Sa4]}).  
However, we have no further explanations of this analogy.}
\end{remark}

\bigskip
\section{Rodrigues type formulae and orthogonal polynomials}

In this section, for the three infinite series $l,\ B_l$ and $D_l$ of skew-growth functions, we show two facts: Rodrigues type formulae (Theorem 2.1).
%Thanks to Table A, for each type $ l, B_l, D_l$ of infinite series, we show that the skew-growth function are expressed in higher (logarithmic) derivatives of the polynomials of the form $t^p(1-t)^q$. In analogy with the classical Rodrigues's formula \cite{[Sz]}, we call the formula Rodrigues type formula. Consequently to the formulae, the skew-growth functions are expressed by a use of orthogonal polynomials.

\begin{theorem} {\bf (Rodrigues type formula) }
\label{Rodrigues} 
 For types $l \ (l\ge1)$, $B_l\ (l\ge2)$ and $D_l \ (l\ge4)$, we have the formulae:
{\small
\begin{align}
\label{RodriguesA}
tN_{G^{dual +}_{{A_l}}}(t) &= \frac{1}{l!} \frac{\mathrm{d}^{l-1}}{\mathrm{d}t^{l-1}}\biggl[ t^l(1-t)^l \biggr], \\
\label{RodriguesB}
N_{G^{dual +}_{B_{l}}}(t) & =  \frac{1}{(l-1)!} \frac{\mathrm{d}^{l-1}}{\mathrm{d}t^{l-1}}\biggl[ t^{l-1}(1-t)^l \biggr], \\
\label{RodriguesD}
N_{G^{dual +}_{D_{l}}}(t) &= \frac{1}{(l-2)!}\frac{\mathrm{d}^{l-2}}{\mathrm{d}t^{l-2}}\biggl[ t^{l-2}(1-t)^{l} \biggr] + \frac{1}{(l-3)!}\frac{\mathrm{d}^{l-3}}{\mathrm{d}t^{l-3}}\biggl[ t^{l-1}(1-t)^{l-2} \biggr]\\
 &= \frac{1}{(l-2)!}\frac{\mathrm{d}^{l-3}}{\mathrm{d}t^{l-3}}\biggl[ t^{l-3}(1-t)^{l-2} \Bigl\{(l-2) - (3l-4)t + (3l-4)t^2 \Bigr\} \biggr].
\end{align}
}
\end{theorem}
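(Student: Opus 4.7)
My plan is to verify each of the three identities by a coefficient comparison, using the explicit expressions of $N_{G^{dual +}_{P}}(t)$ already recorded in Tables A and B of Appendix I (whose coefficients, up to sign, count cones of the cluster fan $\Delta_+(P)$ and coincide with the M\"obius numbers of $[1,c_P]_T$). The common analytic tool is the Leibniz rule applied to the higher derivative of a product $t^a(1-t)^b$:
\begin{equation*}
\frac{\mathrm{d}^n}{\mathrm{d}t^n}\bigl[t^a(1-t)^b\bigr]=\sum_{j=0}^{n}\binom{n}{j}\frac{a!}{(a-n+j)!}\,t^{a-n+j}\,(-1)^j\frac{b!}{(b-j)!}(1-t)^{b-j}.
\end{equation*}

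For types $A_l$ and $B_l$ I would apply this formula with $n=l-1$ and the prescribed $(a,b)$, then expand each surviving $(1-t)^{b-j}$ by the binomial theorem and collect the coefficient of $t^k$. The resulting double sum is precisely the standard coefficient formula for a shifted Jacobi polynomial $P_{l-1}^{(\alpha,\beta)}(t)$ on $[0,1]$, and matching it with the Narayana-type entries of the tables reduces to a single Vandermonde convolution. As a byproduct this identifies $tN_{G^{dual +}_{A_l}}(t)$ and $N_{G^{dual +}_{B_l}}(t)$ (up to elementary factors) with genuine Jacobi polynomials, which is exactly the link exploited later in \S4 and \S6.

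For type $D_l$, my first task is to reduce the two-summand expression to the single compact one, which is pure calculus. Writing $\mathrm{d}^{l-2}/\mathrm{d}t^{l-2}=(\mathrm{d}^{l-3}/\mathrm{d}t^{l-3})\circ(\mathrm{d}/\mathrm{d}t)$, a direct differentiation gives
\begin{equation*}
\frac{\mathrm{d}}{\mathrm{d}t}\bigl[t^{l-2}(1-t)^{l}\bigr]=t^{l-3}(1-t)^{l-1}\bigl[(l-2)-(2l-2)t\bigr],
\end{equation*}
and after rewriting $\tfrac{1}{(l-3)!}=\tfrac{l-2}{(l-2)!}$ in the second term the common operator $\tfrac{1}{(l-2)!}\mathrm{d}^{l-3}/\mathrm{d}t^{l-3}$ pulls outside. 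Factoring $t^{l-3}(1-t)^{l-2}$ out of the resulting integrand leaves the quadratic bracket
\begin{equation*}
(1-t)\bigl[(l-2)-(2l-2)t\bigr]+(l-2)t^{2}=(l-2)-(3l-4)t+(3l-4)t^{2},
\end{equation*}
which is the second formula on the nose, with no recourse to Table B.

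The main obstacle is the remaining coefficient match of the first (equivalently, second) $D_l$ formula with the explicit expression of $N_{G^{dual +}_{D_{l}}}(t)$ in Table B. Unlike types $A_l$ and $B_l$, the type-$D$ M\"obius/cone numbers are not a single hypergeometric product but a two-term expression, which mirrors the two-summand shape of the first Rodrigues formula. I would apply the Leibniz expansion separately to each summand, expand $(1-t)^{b-j}$ binomially, and track the coefficient of $t^k$; each of the two resulting double sums should collapse by a Vandermonde-type identity to one of the two explicit pieces of the Table-B entry. I expect the bookkeeping of the index shifts between the two summands, and the sign cancellations they produce, to be the delicate point, consistent with the author's comment that for type $D_l$ there is no clean reduction to Jacobi polynomials.
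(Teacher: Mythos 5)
Your proposal is correct in outline and follows the same basic strategy as the paper: verify each Rodrigues formula by matching coefficients against the explicit expressions of $N_{G^{dual+}_{P}}(t)$ (note these are in Table A, not Table B, for all three infinite series). The one real difference is analytic: the paper expands $(1-t)^{b}$ by the binomial theorem \emph{inside} the derivative first, so that $\frac{d^{n}}{dt^{n}}\bigl[t^{a}(1-t)^{b}\bigr]=\sum_{k}(-1)^{k}\binom{b}{k}\frac{(a+k)!}{(a+k-n)!}t^{a+k-n}$ and each coefficient of $t^{k}$ appears as a single closed term that matches the Table A entry on sight; your route (Leibniz rule on the product, then binomial expansion of each $(1-t)^{b-j}$) produces a double sum and forces you to invoke a Vandermonde convolution to collapse it, which is extra work the paper avoids. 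For type $D_{l}$ the paper likewise just expands both summands and checks that the sum of the two resulting single sums equals the two-term coefficient $\binom{l}{k}\binom{l+k-2}{k}+\binom{l-2}{k-2}\binom{l+k-3}{k}$ — your plan would get there too, but you have left this step as a sketch rather than carrying it out. On the plus side, your derivation of the second, compact $D_{l}$ expression from the two-summand one (pulling $\frac{1}{(l-2)!}\frac{d^{l-3}}{dt^{l-3}}$ out front and computing the quadratic bracket $(l-2)-(3l-4)t+(3l-4)t^{2}$) is correct and is a detail the paper states but does not verify in its proof.
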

\begin{proof} 
Type $A_l$:  The right hand side (\ref{RodriguesA}) is calculated as
{\footnotesize
\[
 \frac{1}{l!} \frac{\mathrm{d}^{l-1}}{\mathrm{d}t^{l-1}}\biggl[ t^l(1-t)^l \biggr]= 
\frac{1}{l!} \frac{\mathrm{d}^{l-1}}{\mathrm{d}t^{l-1}}\biggl[\sum_{k=0}^{l} (-1)^{k} \binom {l}{k} t^{l+k} \biggr]
= t\sum_{k=0}^{l}(-1)^{k}\frac{(l+k)!}{(l-k)!k!(k+1)!}t^k.
\]}
This gives, up to a factor $t$, RHS of the expression of $N_{G^{dual +}_{{A_l}}}(t)$ in Table A.\\

\smallskip
\noindent
Type $B_l$:  The right hand side of (\ref{RodriguesB}) is calculated as 
{\footnotesize
\[
\frac{1}{(l-1)!} \frac{\mathrm{d}^{l-1}}{\mathrm{d}t^{l-1}}\biggl[ t^{l-1}(1-t)^l\biggr] = 
\frac{1}{(l-1)!} \frac{\mathrm{d}^{l-1}}{\mathrm{d}t^{l-1}}\biggl[\sum_{k=0}^{l} (-1)^{k} \binom {l}{k} t^{l+k-1}\biggr]= \sum_{k=0}^{l}(-1)^{k}\frac{l(l+k-1)!}{(l-k)!k!k!}t^k.
\]}
This gives RHS of the expression of $N_{G^{dual +}_{B_{l}}}(t)$ in Table A.\\

\smallskip
\noindent
Type $D_l$:  We compute the right hand side of (\ref{RodriguesD}).
% {\scriptsize\[
%  \frac{1}{(l-2)!}\frac{\mathrm{d}^{l-2}}{\mathrm{d}t^{l-2}}\biggl[ t^{l-2}(1-t)^{l} \biggr] + \frac{1}{(l-3)!}\frac{\mathrm{d}^{l-3}}{\mathrm{d}t^{l-3}}\biggl[ t^{l-1}(1-t)^{l-2} \biggr]\,\,\,\,\,\,\,\,\,\,\,\,\,\,\,\,\,\,\,\,\,\,\,\,\,\,\,\,\,\,\,\,\,\,\,\,\,\,\,\,\,\,\,\,\,\,\,\,\,\,\,\,\,\,
% \]}
{\footnotesize\[
\ \  \frac{1}{(l-2)!} \frac{\mathrm{d}^{l-2}}{\mathrm{d}t^{l-2}}\biggl[\sum_{k=0}^{l} (-1)^{k} \binom {l}{k} t^{l+k-2} \biggr] + \frac{1}{(l-3)!}\frac{\mathrm{d}^{l-3}}{\mathrm{d}t^{l-3}}\biggl[\sum_{k=0}^{l-2} (-1)^{k} \binom {l-2}{k} t^{l+k-1} \biggr]
 \]}
\vspace{-0.1cm}
{\footnotesize\[
= \sum_{k=0}^{l}(-1)^{k}\frac{l(l-1)(l+k-2)!}{(l-k)!k!k!}t^k + \sum_{k=0}^{l-2}(-1)^{k}\frac{(l-2)(l+k-1)!}{(l-2-k)!k!(k+2)!}t^{k+2} \,\,\,\,\,\,\,\,\,\,\,\,\,\,\,\,\,\,\,\,\,\,\,\,\,\,\,
\]}
\vspace{-0.1cm}
{\footnotesize\[
= \sum_{k=0}^{l}(-1)^{k}\frac{l(l-1)(l+k-2)!}{(l-k)!k!k!}t^k + \sum_{k=2}^{l}(-1)^{k}\frac{(l-2)(l+k-3)!}{(l-k)!k!(k-2)!}t^{k} \,\,\,\,\,\,\,\,\,\,\,\,\,\,\,\,\,\,\,\,\,\,\,\,\,\,\,\,\,\,\,\,\,\,\,\,\,\,\,\,\,
\]}
\vspace{-0.1cm}
{\footnotesize\[
= \sum_{k=0}^{l} (-1)^{k} \biggl( \binom {l}{k}\binom {l+k-2}{k} + \binom {l-2}{k-2}\binom {l+k-3}{k}\biggr) t^k.\,\,\,\,\,\,\,\,\,\,\,\,\,\,\,\,\,\,\,\,\,\,\,\,\,\,\,\,\,\,\,\,\,\,\,\,\,\,\,\,\,\,\,\,\,\,\,\,\,\,\,\,\,\,\,\,\,\,\,\,\,\,\,\,\,\,
\vspace{-0.1cm}
\]}
This gives RHS of the expression of $N_{G^{dual +}_{D_{l}}}(t)$ in Table A.
\end{proof} 

%\[
%\widehat{N}_{G^{dual +}_{P}}(t):=N_{G^{dual +}_{P}}(t)/(1-t).
%\]
For $l\in\Z_{\ge0}$ and $\alpha,\beta\in \R_{>-1}$, let $P^{(\alpha, \beta)}_{l}(x)$ be the Jacobi polynomial (c.f.\ \cite{[Sz]} 2.4). Let us introduce the  {\it shifted Jacobi polynomial} of degree $l$ by setting
\[
 \widetilde{P}^{(\alpha, \beta)}_{l}(t) := P^{(\alpha, \beta)}_{l}(2t-1) . %\quad \& \quad\widetilde{P}^{(\alpha, \beta)}_{0}(t) := 1.
\]
\begin{fact}\cite{[Sz]}(4.3.1) 
\label{Jacobi}
The shifted Jacobi polynomial %$\widetilde{P}^{(\alpha, \beta)}_{l}(t)$ 
satisfies the following equality
\[
\begin{array}{c}
(t-1)^{\alpha}t^{\beta}\widetilde{P}^{(\alpha, \beta)}_{l}(t) = \frac{1}{l!} \frac{\mathrm{d}^{l}}{\mathrm{d}t^{l}}\big[ (t-1)^{l + \alpha}t^{l + \beta} \big].
\end{array}
\]
% \[
% (1-t)^{\alpha}t^{\beta}\widetilde{P}^{(\alpha, \beta)}_{l}(t) = \frac{(-1)^l}{l!} \frac{\mathrm{d}^{l}}{\mathrm{d}t^{l}}\biggl[ (1-t)^{l + \alpha}t^{l + \beta} \biggr].
% \]
\end{fact}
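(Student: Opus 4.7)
The plan is to deduce the identity from the classical Rodrigues formula for Jacobi polynomials,
\[
(1-x)^{\alpha}(1+x)^{\beta} P^{(\alpha,\beta)}_{l}(x) \;=\; \frac{(-1)^l}{2^l\, l!}\, \frac{d^l}{dx^l}\bigl[(1-x)^{l+\alpha}(1+x)^{l+\beta}\bigr],
\]
which is standard and proved in \cite{[Sz]} via the Sturm--Liouville theory underlying the orthogonality of $P^{(\alpha,\beta)}_l$. First I would perform the affine change of variable $x = 2t-1$. This gives $1-x = 2(1-t)$, $1+x = 2t$, and $d/dx = \tfrac{1}{2}\,d/dt$, so the left-hand side acquires an overall factor $2^{\alpha+\beta}$, while on the right-hand side the monomial powers contribute $2^{2l+\alpha+\beta}$ and the $l$ chain-rule factors contribute $2^{-l}$, which together with the preexisting $2^{-l}$ combine to $2^{\alpha+\beta}$. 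These factors cancel uniformly, yielding the intermediate identity
\[
(1-t)^\alpha t^\beta\, \widetilde{P}^{(\alpha,\beta)}_l(t) \;=\; \frac{(-1)^l}{l!}\frac{d^l}{dt^l}\bigl[(1-t)^{l+\alpha}t^{l+\beta}\bigr].
\]

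To convert this into the form asserted in the Fact, I would multiply both sides by $(-1)^\alpha$ and apply the rule $(-1)^\gamma(1-t)^\gamma = (t-1)^\gamma$. On the left this replaces $(1-t)^\alpha$ by $(t-1)^\alpha$; on the right, the combined sign $(-1)^{l+\alpha}$ absorbs into $(1-t)^{l+\alpha}$ inside the bracket to produce $(t-1)^{l+\alpha}$, while the differential operator is unaffected. The factors $(-1)^\alpha$ are understood formally for non-integer $\alpha \in \R_{>-1}$: one either restricts to a domain where $t-1$ carries a fixed branch, or observes that once the common symbolic factor $(t-1)^\alpha t^\beta$ is stripped from both sides, what remains is a polynomial identity in $t$ that holds unambiguously.

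There is no substantive obstacle here; the only care required is bookkeeping of the powers of $2$ and the sign $(-1)^l$, using nothing deeper than the chain rule. A self-contained alternative that bypasses \cite{[Sz]} entirely is to start from the shifted hypergeometric representation $\widetilde{P}^{(\alpha,\beta)}_l(t) = \sum_{j=0}^l \binom{l+\alpha}{l-j}\binom{l+\beta}{j}(t-1)^j t^{l-j}$, apply Leibniz's rule to the right-hand side of the asserted identity to produce a sum of the same shape (with the non-integer falling factorials read via the Gamma function), and match coefficients of $(t-1)^j t^{l-j}$ using the elementary identity $\binom{l}{l-j}\,(l+\alpha)!\,(l+\beta)!/\bigl((\alpha+j)!\,(\beta+l-j)!\bigr) = l!\,\binom{l+\alpha}{l-j}\binom{l+\beta}{j}$.
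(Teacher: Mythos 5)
Your proposal is correct and follows the same route as the paper, which gives no argument of its own but simply cites Szeg\H{o} (4.3.1), i.e.\ the classical Rodrigues formula that you start from; your bookkeeping of the powers of $2$ under $x=2t-1$ and of the sign $(-1)^{l}$ absorbed into $(t-1)^{l+\alpha}$ checks out, and it reproduces exactly the form used later in \eqref{JacobiA}--\eqref{JacobiD}. The remark about reading $(-1)^{\alpha}(1-t)^{\alpha}=(t-1)^{\alpha}$ formally for non-integer $\alpha$ is the right way to handle the only delicate point, and is harmless here since the paper applies the Fact only with integer $\alpha,\beta$.
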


Comparing two formulae in Theorem \ref{Rodrigues} and Fact \ref{Jacobi}, we obtain expression of the skew-growth functions for types $A_l$, $B_l$ and $D_l$ by shifted Jacobi polynomials.
\begin{align}
\label{JacobiA}
N_{G^{dual +}_{{A_l}}}(t) & \ =\  \frac{(-1)^{l-1}}{l}(1-t)\widetilde{P}^{(1, 1)}_{l-1}(t),\\
\label{JacobiB}
N_{G^{dual +}_{B_{l}}}(t) & \ =\  (-1)^{l-1}(1-t)\widetilde{P}^{(1, 0)}_{l-1}(t),\\
\label{JacobiD}
N_{G^{dual +}_{D_{l}}}(t) & \ = \ (-1)^{l-2}(1-t)^2\widetilde{P}^{(2,0)}_{l-2}(t)+
(-1)^{l-1}t^2(1-t)\widetilde{P}^{(1,2)}_{l-3}(t).
\end{align}

\begin{remark} There are Jacobi polynomial expressions for types $H_3$ and $I_2(p)$:
\[ %\begin{equation}
\begin{array}{rcl}
N_{G^{dual +}_{H_{3}}}(t) &\! \!=\!\! &\frac{4}{3}t^{1/2}\cdot (\frac{d}{dt})^2\big(t^{3/2}(1-t)^3\big)= \frac{3}{8}(1-t) \widetilde{P}_2^{(1,-1/2)}(t) \\
N_{G^{dual +}_{I_{2}(p)}}(t) &\! \! =\! \!& \frac{1}{(1+b)t^b(1-t)^{a-1}}\frac{d}{dt}\big(t^{1+b}(1-t)^{1+a}\big) = \frac{1-t}{1+b}\  \ \widetilde{P}_1^{(a,b)}(t) \
\end{array}
\] %\end{equation}
where  $a,b\in \R_{>-1}$ such that $1+a=(p-2)(1+b)$. But we shall not use them.
\end{remark}

\bigskip

\section{Recurrence relations for types $A_l$ ($l\ge1$), $B_l$ ($l\ge2$) and $D_l$
($l\ge4$)}

As an application of the Rodrigues type formulae, we show that the the series of skew-growth functions for types $A_l$ ($l\ge1$), $B_l$ ($l\ge2$) and $D_l$ satisfy either 3-term or 4-term recurrence relations (Theorem 3.1). 
%First we recall that the shifted Jacobi polynomials satisfy  following recurrence relations (c.f.\ \cite{[Sz]} (4.5.1)). 
% \[
% \begin{array}{rl}
% & 2n(n+\alpha+\beta)(2n+\alpha+\beta-2)P^{(\alpha,\beta)}_n(t) \\
% =& (2n+\alpha+\beta-1)\{(2n+\alpha+\beta)(2n+\alpha+\beta-2)t+\alpha^2-\beta^2\}P^{(\alpha,\beta)}_{n-1}(t) \\
% &\!\!\!\! +\ 2(n+\alpha-1)(n+\beta-1)(2n+\alpha+\beta)P^{(\alpha,\beta)}_{n-2}(t) .
% \end{array}
% \]
% \[
% \begin{array}{rl}
% & 2l(l+\alpha + \beta)(2l+\alpha + \beta-2)\widetilde{P}^{(\alpha, \beta)}_{l}(t) \\
% = & (2l+\alpha + \beta - 1)\{(2l+\alpha + \beta)(2l+\alpha + \beta - 2)(2t-1)+\alpha^{2}- \beta^{2}\} \widetilde{P}^{(\alpha, \beta)}_{l-1}(t) \\
% &-2(l+\alpha-1)(l+\beta-1)(2l+\alpha + \beta) \widetilde{P}^{(\alpha, \beta)}_{l-2}(t)
% \end{array}
% \]
%As a consequence, the three infinite series of the skew-growth functions for types ${A_l},\ B_l$ and $D_l$ obtain also the following recurrence relations.

\medskip
\begin{theorem}   
 For type $A_l$ and $B_l$, the following $3$-term recurrence relation holds. 
{\small
\begin{equation}
\label{recurrenceAB}
\begin{array}{rl}
\!\!\! (l+3)N_{G^{dual +}_{{l+2}}}(t) \!\!\! &= -(2l+3)(2t-1) N_{G^{dual +}_{{l+1}}}(t) - l N_{G^{dual +}_{A_l}}(t). \\
\!\!\! (l+2)N_{G^{dual +}_{B_{l+2}}}(t) \!\!\! & =
 -(2l+3)\biggl\{2t-\frac{2(2l^2+4l+1)}{(2l+1)(2l+3)}\biggr\} N_{G^{dual +}_{B_{l+1}}}(t) - \frac{l(2l+3)}{2l+1}N_{G^{dual +}_{B_{l}}}(t)
\end{array}
\end{equation}  
}
For type $D_l$, the following $4$-term recurrence relation holds.
{\small
\begin{equation}
\label{recurrenceD}
N_{G^{dual +}_{D_{l+3}}}(t)  = (a_{l} + b_{l} t) N_{G^{dual +}_{D_{l+2}}}(t) 
 +(c_{l} + d_{l} t + e_{l} t^2) N_{G^{dual +}_{D_{l+1}}}(t) + (f_{l} + g_{l} t)  N_{G^{dual +}_{D_l}}(t). 
 \end{equation}
}
Here, $a_{l}$, $b_{l}$, $c_{l}$, $d_{l}$, $e_{l}$, $f_{l}$ and $g_{l}$ are the following rational functions:
 { \small
 \[
  a_{l} = \frac{(l + 2)(43l^3- 78l^2- 129l-24)}{(l + 3)(43l^3- 35l^2- 36l-32)},\,\,\,\,\,\,\,\,\,\,\,
  \]
  \vspace{0.05cm}
  \[
  b_{l} = -\frac{86l^4+ 145l^3- 196l^2- 623l-456}{(l + 3)(43l^3 - 35l^2- 36l - 32)},
  \]
   \vspace{0.05cm}
 \[
  c_{l} = \frac{l(43l^3+ 180l^2+ 45l+56)}{(l+3)(43l^3- 35l^2- 36l-32)},\,\,\,\,\,\,\,\,\,\,\,\,
  \]
  \vspace{0.05cm}
  \[
  d_{l} = -\frac{2l(172l^3+ 333l^2- 23l-32)}{(l+3)(43l^3- 35l^2- 36l-32)},\,\,\,\,\,\,\,\,
  \]
  \vspace{0.05cm}
  \[
  e_{l} =\frac{2(2l-1)(2l+1)(43l^2 + 51l - 24)}{(l+3)(43l^3- 35l^2- 36l-32)},\,\,\,\,
 \]
   \vspace{0.05cm}
 \[
  f_{l} = -\frac{(l-1)(43l^3+ 137l^2+ 38l-48)}{(l+3)(43l^3- 35l^2- 36l-32)},\,\,\,\,\,
  \]
  \vspace{0.05cm}
 \[
  g_{l} =\frac{(l-1)(2l+1)(43l^2+ 51l-24)}{(l+3)(43l^3- 35l^2- 36l-32)}.\,\,\,\,\,\,\,\,\,\,
  \]
  }

% { \small
% \[
%  a_{l} = \frac{(l + 2)(43l^3- 78l^2- 129l-24)}{(l + 3)(43l^3- 35l^2- 36l-32)},\,\,\,\,\,\,\,\,\,\,\,
%  \]
%  \[
%  b_{l} = -\frac{86l^4+ 145l^3- 196l^2- 623l-456}{(l + 3)(43l^3 - 35l^2- 36l - 32)},
%  \]
%  \[
%  c_{l} = \frac{l(43l^3+ 180l^2+ 45l+56)}{(l+3)(43l^3- 35l^2- 36l-32)},\,\,\,\,\,\,\,\,\,\,\,\,
%  \]
%  \[
%  d_{l} = -\frac{2l(172l^3+ 333l^2- 23l-32)}{(l+3)(43l^3- 35l^2- 36l-32)},\,\,\,\,\,\,\,\,
%  \]
%  \[
%  e_{l} =\frac{2(2l-1)(2l+1)(43l^2 + 51l - 24)}{(l+3)(43l^3- 35l^2- 36l-32)},\,\,\,\,
% \]
%  \[
%  f_{l} = -\frac{(l-1)(43l^3+ 137l^2+ 38l-48)}{(l+3)(43l^3- 35l^2- 36l-32)},\,\,\,\,\,
%  \]
% \[
%  g_{l} =\frac{(l-1)(2l+1)(43l^2+ 51l-24)}{(l+3)(43l^3- 35l^2- 36l-32)}.\,\,\,\,\,\,\,\,\,\,
%  \]
%  }
\end{theorem}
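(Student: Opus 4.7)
For types $A_l$ and $B_l$, my plan is to derive the $3$-term recurrences (\ref{recurrenceAB}) from the classical $3$-term recurrence for Jacobi polynomials (\cite{[Sz]} \S4.5):
\[
\begin{aligned}
2(n+1)(n+\alpha+\beta+1)(2n+\alpha+\beta)\,P_{n+1}^{(\alpha,\beta)}(x)
&= (2n+\alpha+\beta+1)\bigl\{(2n+\alpha+\beta)(2n+\alpha+\beta+2)x + \alpha^2-\beta^2\bigr\}\,P_n^{(\alpha,\beta)}(x) \\
&\quad - 2(n+\alpha)(n+\beta)(2n+\alpha+\beta+2)\,P_{n-1}^{(\alpha,\beta)}(x).
\end{aligned}
\]
For type $A_l$ I would specialize to $(\alpha,\beta)=(1,1)$, set $x=2t-1$, and use (\ref{JacobiA}) to substitute $\widetilde{P}_{l-1}^{(1,1)}(t)=(-1)^{l-1}l\,N_{G^{dual +}_{A_l}}(t)/(1-t)$; the common factor $(1-t)$ cancels throughout, and the vanishing $\alpha^{2}-\beta^{2}=0$ yields the clean middle coefficient $-(2l+3)(2t-1)$. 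For type $B_l$ I would take $(\alpha,\beta)=(1,0)$ and apply (\ref{JacobiB}); here the nonzero $\alpha^{2}-\beta^{2}=1$ is what generates the rational offset $2(2l^{2}+4l+1)/\{(2l+1)(2l+3)\}$ in the middle coefficient after dividing by $2l+1$.

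For type $D_l$, no single shifted Jacobi family represents $N_{G^{dual +}_{D_l}}$, so my plan is to exploit the two-family decomposition (\ref{JacobiD}), namely $N_{G^{dual +}_{D_l}}(t)=A_l(t)+B_l(t)$ with $A_l:=(-1)^{l-2}(1-t)^{2}\widetilde{P}^{(2,0)}_{l-2}(t)$ and $B_l:=(-1)^{l-1}t^{2}(1-t)\widetilde{P}^{(1,2)}_{l-3}(t)$. Each of the two Jacobi families obeys its own $3$-term recurrence in the index, and iterating these recurrences expresses each of $N_{G^{dual +}_{D_{l+k}}}$ ($k=0,1,2,3$) in the four-element basis $\{\widetilde{P}^{(2,0)}_{l-2},\widetilde{P}^{(2,0)}_{l-1},\widetilde{P}^{(1,2)}_{l-3},\widetilde{P}^{(1,2)}_{l-2}\}$ over rational functions in $l$, with coefficients polynomial in $t$. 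Since four vectors in a $4$-dimensional space are linearly dependent, this produces a linear relation among the four consecutive $N_{G^{dual +}_{D_{l+k}}}$; the polynomial $t$-shapes $a_l+b_l t$, $c_l+d_l t+e_l t^{2}$, $f_l+g_l t$ in (\ref{recurrenceD}) are dictated by matching $t$-degrees on both sides, and the seven unknowns $a_l,\dots,g_l$ are then determined by equating coefficients in the chosen basis.

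The main obstacle I anticipate is the explicit solution of the resulting linear system for $a_l,\dots,g_l$: with seven unknowns and entries rational in $l$ and polynomial in $t$, it is mechanically solvable but algebraically heavy. The distinctive cubic $43l^{3}-35l^{2}-36l-32$ appearing in every denominator of the stated answer should emerge as the resolvent of this elimination step (explaining in particular why the recurrence closes at four terms rather than the naive five). In practice the computation is best carried out with computer algebra, and its output can be cross-checked against the closed-form expressions of $N_{G^{dual +}_{D_l}}(t)$ for small values of $l$ in Table A of Appendix I.
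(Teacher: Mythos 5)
Your treatment of types $A_l$ and $B_l$ is correct and coincides with the route the paper itself indicates: it reduces \eqref{recurrenceAB} to Szeg\H{o}'s three-term recurrence (4.5.1) via the Jacobi expressions \eqref{JacobiA} and \eqref{JacobiB}, with the factor $(1-t)$ and the sign $(-1)^{l-1}$ cancelling and $\alpha^2-\beta^2$ producing exactly the constant shift in the $B_l$ middle coefficient. No issues there.

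For type $D_l$ your route genuinely differs from the paper's, and it contains a gap in the existence argument. The paper does not derive the recurrence from the Jacobi decomposition at all: it takes the stated $a_l,\dots,g_l$ as given, writes the $k$th coefficient of $N_{G^{dual+}_{D_l}}(t)$ as $\mathcal{C}(l,k)=\binom{l}{k}\binom{l+k-2}{k}+\binom{l-2}{k-2}\binom{l+k-3}{k}=\frac{(l+k-3)!}{(l-k)!(k!)^2}\{l(l-1)(l+k-2)+(l-2)k(k-1)\}$, and verifies by a (heavy but elementary) polynomial identity that the right-hand side of \eqref{recurrenceD} reproduces $\mathcal{C}(l+3,k)$. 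Your plan instead tries to \emph{derive} the relation, and the step ``four vectors in a $4$-dimensional space are linearly dependent'' is false: four vectors in a four-dimensional space are generically independent, so dependence of $N_{G^{dual+}_{D_l}},\dots,N_{G^{dual+}_{D_{l+3}}}$ over the coefficient field is not automatic. Moreover, even granting a dependence over $\mathbb{Q}(l,t)$, nothing in the dimension count forces the coefficients to be polynomials in $t$ of the specific degrees $1,2,1$; when you match coefficients in your four-element basis you get roughly ten scalar equations (three plus two for the two $\widetilde{P}^{(2,0)}$ components, three plus two for the two $\widetilde{P}^{(1,2)}$ components) in the seven unknowns $a_l,\dots,g_l$, an overdetermined system whose consistency is precisely the nontrivial content of the theorem and must be checked, not assumed. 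Your plan is still executable --- solve the system by computer algebra and confirm the leftover equations are satisfied, or simply substitute the stated $a_l,\dots,g_l$ and verify --- but as written the existence of the four-term relation is asserted rather than proved, and that assertion is where the real work lies.
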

\begin{proof}

The relations for  types $A_l$ and $B_l$ are shown either directly using the explicit formule \eqref{RodriguesA} and \eqref{RodriguesB}, or, in view of \eqref{JacobiA} and \eqref{JacobiB}, reducing to the relations for corresponding Jacobi polynomials (\cite{[Sz]}(4.5.1)). So, we have only to prove the relation for type $D_l$.  

%  A rough idea is the following. For three indeterminates $(X,Y,Z)$, we consider an affine linear equation:
% $N_{G^{dual +}_{D_{l+3}}}(t)  = X N_{G^{dual +}_{D_{l+2}}}(t)  + Y N_{G^{dual +}_{D_{l+1}}}(t) + Z N_{G^{dual +}_{D_l}}(t)$.  Using the decomposition (3.7), we regard the equation as the sum of two affine linear equations, where each equation consists  of either only of Jacobi polynomials of type $(2, 0)$ or only of type $(0,1)$. Then, each equations, using the recurrent relations on the Jacobi polynomials, reduces to two linear equations. Altogether, for the three indeterminates, we obtained 4 affine linear equations. That is, the system is overdetermined. However, those explicit descriptions of the coefficients shows that  the $4\times 4$ matrix degenerates to rank 3, giving a non-trivial zero eigenvector, which is explicitly given as in the Theorem. Its verification is proceeded as follows.

Let us consider the $k$th coefficient of $N_{G^{dual +}_{D_{l}}}(t)$ up to the sign $(-1)^k$:
\[
\begin{array}{rcl}
\mathcal{C}(l, k)  &:= &\binom {l}{k}\binom {l+k-2}{k} + \binom {l-2}{k-2}\binom {l+k-3}{k}\\
& =&  \frac{(l+k-3)!}{(l-k)!(k!)^2}\big\{  l(l-1)(l+k-2)+(l-2)k(k-1) \big\}.
\end{array}
\]
We compute the coefficient of the term $(-t)^k$ on the right hand side of \eqref{recurrenceD}.
 {\small
 \[
\begin{array}{rl}
& a_{l}\cdot \mathcal{C}(l+2, k)- b_{l}\cdot \mathcal{C}(l+2, k-1)+ c_{l}\cdot \mathcal{C}(l+1, k)- d_{l}\cdot \mathcal{C}(l+1, k-1) \\
\vspace{0.2cm}
& + e_{l}\cdot \mathcal{C}(l+1, k-2) + f_{l}\cdot \mathcal{C}(l, k) - g_{l}\cdot \mathcal{C}(l, k-1) \\
 = & \large \frac{(l+k-4)!}{(l+3-k)!(k!)^2(l+3)(43l^3- 35l^2- 36l-32)}  \Big\{ -k^{2}(l-1)(l-k+2)(l-k+3) \\
& \ \ \ \times(2l+1)(43l^{2}+51l-24)(-4 + 6k - 2 k^{2} + 5l - 4 kl + k^{2}l - 4 l^{2} + k l^{2} +  l^{3}) \\ 
\vspace{-0.15cm}
\\
&  + \ \ \ 2(k-1)^{2}k^{2}(2l-1)(2l+1)(-24 + 51l+ 43l^{2}) \\
& \ \ \ \times(-6 + 5k-k^{2}+3l-4kl+k^{2}l - 2 l^{2} + k l^{2}+l^{3}) \\
\vspace{-0.15cm}
\\
& +\ \ \ (l+2)(l-k+3)(l+k-3)(l+k-2)(l+k-1)(43l^{3}- 78 l^{2}- 129l-24) \\
 & \ \ \ \times(2k + 2l + 2 kl + k^{2} l + 3 l^{2} + k l^{2} + l^{3}) \\
\vspace{-0.15cm}
 \\
& -\ \ \ (l-1)(l-k+1)(l-k+2)(l-k+3)(l+k-3)(43l^{3}+137l^{2}+38l-48)  \\
& \ \ \ \times(2k-2k^{2}+2l-2kl + k^2 l - 3l^{2}+kl^{2}+l^{3}) \\
\vspace{-0.15cm}
\\
&  +\ \ \ l(l-k+2)(l-k+3)(l+k-3)(l+k-2)(43l^{3}+ 180l^{2}+ 45l+56)  \\
& \ \ \ \times(k-k^{2}-l+k^{2}l + kl^{2}+l^{3}) \\
\vspace{-0.2cm}
\\
&  +\ \ \ 2k^{2}l(l-k+3)(l+k-3)(172l^{3}+ 333l^{2} - 23l-32) \\
& \ \ \ \times(-2 + 3k- k^{2}- 2kl+ k^{2}l-l^{2} + kl^{2} + l^{3}) \\
\vspace{-0.15cm}
\\
&  +\ \ \ k^{2}(l+k-3)(l+k-2)(86l^{4}+ 145 l^{3}- 196 l^{2}- 623l-456) \\
\vspace{0.3cm}
& \ \ \ \times(-2 + 2k + l + k^{2}l+2l^{2} + kl^{2}+l^{3})\Big\}  \\
\vspace{0.3cm}
=& \frac{(l+k)!}{(l+3-k)!(k!)^2} (6+5k+ k^{2}+11l+ 4kl+k^{2}l+6l^{2}+kl^{2}+l^{3}) \\
=& \mathcal{C}(l+3, k).
\end{array}
\]}

\end{proof}

As an application of the recurrence relation, we observe the following.

\medskip
\noindent
{\bf Fact.} The skew-growth function $N_{G^{dual +}_{{A_l}}}(t)$ ($l\ge1$) is divisible by $2t-1$ if and only if $l$ is even.

\section{Proof of Conjecture 2 except for types $D_l$ }

In the present section, we prove,  except for types $D_l$, following Theorem, which approves Conjecture 2.  The proof for types $D_l$ is given in the next section 5. 

\medskip
\begin{theorem} 
\label{rootsP}
 The skew-growth function $N_{G^{dual +}_{P}}(t)$ for any finite type $P$ has $\rank(P)$ simple roots on the interval $(0, 1]$, including a root at $t=1$. 
\end{theorem}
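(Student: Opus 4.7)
The plan splits into three cases matching the classification of the introduction: the two infinite families $A_l$ and $B_l$, and the exceptional types $E_6, E_7, E_8, F_4, G_2, H_3, H_4, I_2(p)$; the series $D_l$ is deferred to Section~5.

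For types $A_l$ and $B_l$ the natural route is via the shifted--Jacobi representations \eqref{JacobiA} and \eqref{JacobiB}. I would invoke the classical theorem (\cite{[Sz]}, Theorem~3.3.1) that for $\alpha,\beta>-1$ the Jacobi polynomial $P^{(\alpha,\beta)}_n(x)$ has $n$ distinct simple zeros in $(-1,1)$. After the linear change of variable $x=2t-1$, this gives $l-1$ distinct simple zeros in $(0,1)$ for $\widetilde{P}^{(1,1)}_{l-1}(t)$ (type $A_l$) and for $\widetilde{P}^{(1,0)}_{l-1}(t)$ (type $B_l$). Since
\begin{equation*}
\widetilde{P}^{(\alpha,\beta)}_{l-1}(1)\ =\ P^{(\alpha,\beta)}_{l-1}(1)\ =\ \binom{l-1+\alpha}{l-1}\ \ne 0
\end{equation*}
when $\alpha=1$, the point $t=1$ is not a zero of the Jacobi factor, so the factor $(1-t)$ in \eqref{JacobiA}, \eqref{JacobiB} contributes one further simple zero at $t=1$. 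Together this produces exactly $l=\mathrm{rank}(P)$ simple zeros of $N_{G^{dual +}_P}(t)$ on $(0,1]$, including one at $t=1$. An alternative route sketched in the introduction would deduce the same conclusion from the $3$-term recurrence of Theorem~3.1, by checking that $\widehat{N}_{G^{dual +}_{A_l}}(t)$ and $\widehat{N}_{G^{dual +}_{B_l}}(t)$ form Sturm chains on $(0,1)$.

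For the seven exceptional types of bounded rank, $E_6, E_7, E_8, F_4, G_2, H_3$ and $H_4$, the polynomial $N_{G^{dual +}_P}(t)$ is displayed explicitly in Table~B of Appendix~I. I would dispatch each one by direct computation on the fixed polynomial: Sturm's theorem (or Rolle interlacement on a fine enough subdivision of $(0,1]$) yields a rigorous count of simple real zeros on $(0,1]$, and in every case this count equals $\mathrm{rank}(P)$.

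For the parametric family $I_2(p)$, $p\ge 3$, the rank is $2$ but the polynomial depends on $p$, so a uniform argument is needed. I would invoke the Jacobi identity in the remark following Fact~\ref{Jacobi},
\begin{equation*}
N_{G^{dual +}_{I_2(p)}}(t)\ =\ \frac{1-t}{1+b}\,\widetilde{P}^{(a,b)}_1(t),\qquad 1+a=(p-2)(1+b),
\end{equation*}
with $a,b\in\R_{>-1}$. The constraint $p\ge 3$ together with $b>-1$ forces $a>-1$, so the linear Jacobi factor has its unique simple zero in $(0,1)$, and together with $(1-t)$ this delivers the two required simple zeros on $(0,1]$. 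The principal obstacle throughout is bookkeeping at $t=1$: one must check that $t=1$ is contributed by the factor $(1-t)$ alone and is not doubled by an accidental vanishing of the Jacobi factor, which is controlled uniformly by the positivity $\alpha>-1$ of the relevant Jacobi exponent.
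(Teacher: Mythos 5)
Your proposal is correct and follows essentially the same route as the paper: the Jacobi-polynomial representations \eqref{JacobiA}, \eqref{JacobiB} together with Szeg\"o's zero theorem for $A_l$ and $B_l$ (the paper likewise defers $D_l$ to \S5), and case-by-case Sturm-sequence computations on the explicit polynomials of Table~B for the exceptional and non-crystallographic types. The only small divergence is $I_2(p)$, which the paper folds into the Sturm case (trivially, since $\widehat{N}_{G^{dual +}_{I_2(p)}}(t)=1-(p-1)t$ has its single root $1/(p-1)\in(0,1)$), whereas you invoke the Jacobi identity of the remark that the paper explicitly declines to use; both work.
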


\begin{proof}
{\bf Case I:  type} $A_l$ ($l\in \Z_{\ge1}$) and $B_l$ ($l\in\Z_{\ge1}$).   

This is an immediate consequence of the formulae \eqref{JacobiA} and \eqref{JacobiB}, since the Jacobi polynomials $\widetilde{P}_{l-1}^{(1,1)}$ and $\widetilde{P}_{l-1}^{(1,0)}$ are well known to have $l-1$ simple roots on the interval $(0,1)$  (see \cite{[Sz]} Theorem 3.3.1).

\medskip
\noindent
{\bf Case II: Exceptional types and non-crystallographic types}    
% $P = E_6, E_7, E_8, F_4,$ $G_2, H_3, H_4$ and $I_2(p)$.

Recall $\widehat{N}_{G^{dual +}_{P}}(t):=N_{G^{dual +}_{P}}(t)/(1-t)$, which is a polynomial of degree $l-1$. %=\rank(P)$. 
Then, we apply the Euclid division algorithm for the pair of polynomials $f_0:=\widehat{N}_{G^{dual +}_{P}}$ and $f_1:=(\widehat{N}_{G^{dual +}_{P}}(t))'$. So, we obtain, a sequence $f_0,\ f_1,\ f_2,\cdots$ of polynomials in $t$ such that $f_{k-1}=f_k\cdot q_{k-1}+f_{k+1}$ for $k=1,2,\cdots$ (where $q_{k-1}$ is the quotient and $f_{k+1}$ is the remainder). 

Then, we prove the following fact by direct calculations case by case.

\begin{fact} 
i) The degrees of the sequence $f_0,\ f_1,\ f_2,\cdots$ of polynomials descend one by one, and $f_{l-1}$ is a non-zero constant.

ii) The  sequence $f_0(0), f_1(0), -f_2(0), \ldots, -f_{l-1}(0)$ has constant sign and the sequence $f_0(1), f_1(1), -f_2(1), \ldots, -f_{l-1}(1)$ has alternating sign.
\end{fact}

Applying the Sturm Theorem (see for instance \cite{[T]} Theorem 3.3), we observe that 
$f_0$ has $l-1$ distinct roots on the interval $(0,1)$. Since the polynomial $f_0=\widehat{N}_{G^{dual +}_{P}}$ is of degree $l-1$, all the roots should be simple.

This completes a proof of Theorem \ref{rootsP}.
\end{proof}

\begin{remark}
An alternative proof of Theorem 3.1 for types $A_l$ and $B_l$ is given as follows: using the recurrence relations \eqref{recurrenceAB}, we see that the sequences {\footnotesize $\widehat{N}_{G^{dual +}_{A_l}}(t)$} and {\footnotesize $\widehat{N}_{G^{dual +}_{B_l}}(t)$} form Sturm sequences on the interval $[0,1]$ (see \cite{[T]} Theorem {\rm 3.3}).  
Then the number of sign changes of the boundary values of the sequences is counted  as $l-1$ by the facts: 
{\small $\widehat{N}_{G^{dual +}_{P}}(0) = 1$ and $\widehat{N}_{G^{dual +}_{P}}(1) = -N'_{G^{dual +}_{P}}(1)$, and 
$%\[
N'_{G^{dual +}_{{A_l}}}(1) = (-1)^l, \
N'_{G^{dual +}_{B_{l}}}(1) = (-1)^{l}l \ \ \text{and}\ \
N'_{G^{dual +}_{D_{l}}}(1) = (-1)^{l}(l-2),
$% \]
}
(use \eqref{recurrenceD} again). 
% Here, we observe sign changes $l-1$-times, and conclude that $\widehat{N}_{G^{dual +}_{P_l}}(t)$ has $l-1$ simple roots on the interval $(0,1)$.
\end{remark}

\begin{remark}
In \cite{[I]},  the first author calculated the derivative at $t = 1$ of the skew-growth functions $N_{G_{P}^{+}}(t)$ for Artin monoids $G^{+}_{P}$. We remark that, for any type $P$, the following equality holds
\vspace{-0.3cm}
\[
N'_{G^+_{P}}(1) = N'_{G^{dual +}_{P}}(1).
\]
\end{remark}

\bigskip

\section{Proof of Conjecture 2 for types $D_l \ \ (l\ge4)$}

In this section, we prove the following theorem, which answers to Conjecture 2 for the types $D_l$ ($l\ge4$) affirmatively.

\begin{theorem}
\label{rootsD}
The polynomial $N_{G^{dual +}_{D_{l}}}(t)$ has $l$ simple roots on the interval $(0, 1]$.
\end{theorem}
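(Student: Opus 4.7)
The plan is to use the third Rodrigues-type formula from Theorem~\ref{Rodrigues}: setting $F(t) := t^{l-3}(1-t)^{l-2}Q(t)$ with $Q(t) := (l-2)-(3l-4)t+(3l-4)t^2$, we have $N_{G^{dual +}_{D_l}}(t) = \frac{1}{(l-2)!}F^{(l-3)}(t)$. The discriminant of $Q$ is $(3l-4)(4-l)$, negative for $l\ge 5$, and $Q(0)=l-2>0$, so $Q>0$ on $\R$ when $l\ge 5$. The base case $l=4$ is handled directly: here $Q(t)=2(2t-1)^2$ gives $\widehat{N}_{G^{dual +}_{D_4}}(t) = -(2t-1)(10t^2-9t+1)$, whose roots $1/2$ and $(9\pm\sqrt{41})/20$ are simple real zeros in $(0,1)$, together with the root of $N_{G^{dual+}_{D_4}}$ at $t=1$.

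For $l\ge 5$, iterating Rolle's theorem on $F$ yields inductively that $F^{(k)}$ vanishes to order $l-3-k$ at $0$, to order $l-2-k$ at $1$, and has at least $k$ zeros (with multiplicity) in $(0,1)$ for $0\le k\le l-3$ (at each step, Rolle provides a fresh zero in each open interval between consecutive zeros of $F^{(k-1)}$, while multiplicities at the endpoints drop by one). At $k=l-3$ this produces a simple zero of $F^{(l-3)}$ at $t=1$ together with at least $l-3$ zeros in $(0,1)$, hence at least $l-2$ zeros of $N_{G^{dual +}_{D_l}}(t)$ in $(0,1]$. Since $\deg N_{G^{dual +}_{D_l}}=l$, two zeros remain to be located.

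I would next use Descartes' rule of signs on the expansion $N_{G^{dual +}_{D_l}}(t)=\sum_{k=0}^l (-1)^k\mathcal{C}(l,k)t^k$ with all $\mathcal{C}(l,k)>0$ (from the proof of Theorem~\ref{Rodrigues}): $N_{G^{dual +}_{D_l}}(-t)$ has only positive coefficients, so no negative real roots exist, and the number of positive real roots is $\equiv l\pmod 2$. To exclude positive real roots in $(1,\infty)$, I substitute $s=t-1$: one computes $F(1+s)=(-1)^{l-2}s^{l-2}(1+s)^{l-3}Q(1+s)$ with $Q(1+s)=(l-2)+(3l-4)s+(3l-4)s^2$, which has all positive coefficients in $s$. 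Hence $s^{l-2}(1+s)^{l-3}Q(1+s)$ has all positive coefficients in $s$, and so do all its $s$-derivatives up to order $l-3$. Therefore $F^{(l-3)}(t)$ has constant sign $(-1)^{l-2}$ on $(1,\infty)$ and does not vanish there. Combined with the previous bullets, all positive real roots of $N_{G^{dual +}_{D_l}}(t)$ lie in $(0,1]$, and by parity their number is either $l-2$ (with two complex-conjugate roots) or $l$.

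The main obstacle is to exclude the $l-2$ case. It suffices to produce one further real zero in $(0,1)$, since the parity constraint then forces the count to $l$, with simplicity following from the degree bound. For this I would refine the Rolle argument using the Leibniz expansion
\[
F^{(l-3)} = G^{(l-3)}Q + (l-3)\,G^{(l-4)}Q' + \tbinom{l-3}{2}\,G^{(l-5)}Q''
\]
with $G(t):=t^{l-3}(1-t)^{l-2}$, combined with the explicit zero and sign structure of each $G^{(j)}$ on $(0,1)$ (inherited from $G$ via Rolle, without invoking any Jacobi polynomial theory) and the positivity of $Q''=2(3l-4)$, the sign data $Q'(0)<0<Q'(1)$, and $Q>0$. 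Tracking signs of the three summands carefully against the Rolle-zeros of $G^{(j)}$, I expect to exhibit an $(l-1)$-st sign change of $F^{(l-3)}$ in $(0,1)$. This sign-tracking through the derivative chain — staying within the Rodrigues framework and avoiding orthogonal polynomial theory — is the intricate step the paper warns about, and the key obstacle to writing out the proof cleanly.
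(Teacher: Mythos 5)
Your setup is correct as far as it goes: the identification $N_{G^{dual +}_{D_l}}=\frac{1}{(l-2)!}F^{(l-3)}$ with $F=t^{l-3}(1-t)^{l-2}Q$, the positivity of $Q$ on $\R$ for $l\ge5$, the base case $l=4$, the Rolle count giving at least $l-3$ interior zeros of $F^{(l-3)}$ plus the simple zero at $t=1$, and the localization of all positive roots to $(0,1]$ via Descartes together with the substitution $s=t-1$ are all sound. The last point is a genuinely nice reduction that the paper does not need (it exhibits $l$ roots in $(0,1]$ directly): you have correctly reduced everything to producing \emph{one} additional real zero in $(0,1)$, after which parity and the degree bound finish the job.

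But that one additional zero is precisely the content of the theorem, and your proposal does not actually produce it — the final paragraph is an announced strategy (``I expect to exhibit an $(l-1)$-st sign change''), not an argument. Naive Rolle on $F$ is off by exactly two roots, and some extra input is needed to see where the two missing roots come from; the three-term Leibniz expansion $F^{(l-3)}=G^{(l-3)}Q+(l-3)G^{(l-4)}Q'+\binom{l-3}{2}G^{(l-5)}Q''$ is unpromising because the three factors $G^{(j)}$ oscillate with three distinct, merely interlacing zero sets, and there is no evident way to control the sign of their weighted sum between consecutive Rolle zeros. The paper's device is different and exploits a structure your $F$ destroys: it splits off one factor $(1-t)$ and works with $H_l(t)=(t^2-t)^{l-3}\{(l-2)-(3l-4)t+(3l-4)t^2\}$, which is symmetric under $t\mapsto 1-t$ (note $(l-2)-(3l-4)t+(3l-4)t^2=(l-2)-(3l-4)t(1-t)$). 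Hence the roots of every $H_l^{(i)}$ are symmetric about $t=1/2$, and an explicit closed-form evaluation of $H_l^{(2L)}(1/2)$ (the paper's Formulae A and B, with $2L\approx l/2$ depending on $l\bmod 4$) shows its sign is incompatible with the scenario in which $H_l^{(2L)}$ has only the $2L$ roots guaranteed by Rolle; this forces two extra roots at level $2L$, which then propagate through the remaining Rolle steps to show $H_l^{(l-4)}$ has the full complement of $l$ simple roots on $[0,1]$. A final two-term Leibniz identity $N_{G^{dual +}_{D_l}}=\frac{(-1)^{l-3}}{(l-2)!}\bigl((1-t)H_l^{(l-3)}-(l-3)H_l^{(l-4)}\bigr)$ and a sign comparison on the intervals between the interlaced roots of $H_l^{(l-4)}$ and $H_l^{(l-3)}$ then transfers these roots to $N_{G^{dual +}_{D_l}}$. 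Without the symmetry and the explicit center-value computation (or some substitute for them), your sign-tracking plan has no concrete mechanism for detecting the extra sign change, so the proof is incomplete at its essential step.
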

\begin{proof}
Recall the Rodrigues type formula \eqref{RodriguesD} for type $D_l$ for $l \in \Z_{\ge4}$. Up to the factor $(1-t)$,  we consider the factor in the derivatives:
\[
\begin{array}{rcl}
H_{l}(t) &:=&  (t^2-t)^{l-3} \Bigl\{(l-2) - (3l-4)t + (3l-4)t^2 \Bigr\}\\
\end{array}
\]
so that the following equality holds.
\begin{equation}
\label{RodriguesD3}
\begin{array}{rcl}
N_{D_l}(t)=\frac{(-1)^{l-3}}{(l-2)!}\big(\frac{d}{dt}\big)^{l-3} \big( (1-t)H_l(t)\big)
\end{array}
\end{equation}

Set $H^{(i)}_{l}(t) := \frac{\mathrm{d}^{i}}{\mathrm{d}t^{i}}H_{l}(t)$ for $0\le i\le l-3$. Applying $i$-times the Rolle theorem to the polynomial $H_l(t)$, we know that $H^{(i)}_{l}(t)$ has at least $i$ number of distinct (possibly multiple) roots on the interval $(0,1)$ and that if the number of roots is exactly equal to $i$ then the function $H^{(i)}_{l}(t)$  changes its sign at the zeros. 
% Then, since $H^{(i)}_{l}(t)$ is a real polynomial, the number of roots of $H^{(i)}_{l}(t)$ on the interval $(0,1)$ is either equal to $i$ or to $i+2$.

\begin{lem}
\label{rootsH}
%Including 0 and 1, 
The polynomial $H_l^{(l-4)}(t)$ has $l$ simple roots on the interval $[0,1]$.
\end{lem}
\noindent
{\it Proof of } {\bf Lemma \ref{rootsH}.}
Since $H_{l}(t)$ is invariant by the reflection $t\mapsto 1-t$, we have 
$H^{(i)}_{l}(1/2+t)=(-1)^{i}H^{(i)}_{l}(1/2-t)$. 
Therefore, the set of roots of $H^{(i)}_{l}$ are symmetric with respect to the reflection centered at $t=1/2$. In particular, for $0<2L\le l-3$, if $H^{(2L)}_{l}(1/2)$ is non 
zero, then the half of real roots are lying on the half line $(1/2,\infty)$. If the number of roots on the interval $(1/2,1)$ were equal to $L$ (hence $2L$ roots on $(0,1)$), as we saw above by Rolle's theorem, the sign of 
$H^{(2L)}_{l}(1/2)$ should be $(-1)^{l+L-3}$.  However, the following {\bf Formula A}\ shows that  it is not the case for some $L$. That is, $H^{(2L)}_{l}(t)$ has more than $L$ roots on the interval $(1/2,1)$. Since $\deg(H^{(2L)}_{l})=2l-2L-4$ and the multiplicity of the zeros at $t= 0, 1$ is $l-3-2L$, we conclude that the number of roots on the interval (1/2,1) is bounded by, and, hence is equal to $L+1$, and that all roots are simple. 
Then, for $2L\le i\le l-3$, applying $i-2L$ times Rolle's theorem to $H_l^{(2L)}$, we see that the polynomial $H^{(i)}_{l}$ has $i+2$ simple roots on the interval $(0,1)$. 
Therefore, it remains only to show the following formula.

\bigskip
\noindent
{\bf Formula A.}  According to the residue class $l\bmod 4$, we have 
\[
\begin{array}{rcl}
\vspace{0.2cm}
H_{4L}^{(2L)}(\frac{1}{2}) & =& (-1)^{L}2^{6-6L}\frac{(4L-2)!(2L)!}{(3L-2)!L!},\\
\vspace{0.2cm}
H_{4L+1}^{(2L)}(\frac{1}{2}) & = & (-1)^{L+1}2^{2-6L}3\frac{(4L-1)!(2L)!}{(3L-1)!L!}, \\
\vspace{0.2cm}
H_{4L+2}^{(2L)}(\frac{1}{2}) & = &(-1)^{L}2^{1-6L}\frac{(4L)!(2L)!}{(3L)!L!}, \\
H_{4L+3}^{(2L)}(\frac{1}{2}) & = &(-1)^{L+1}2^{-2-6L}\frac{(4L+1)!(2L)!}{(3L+1)!L!}.\qquad\qquad
\end{array}
\]

\noindent
Before showing Formula A, we first prepare an auxiliary Formula B.

\noindent
{\bf Formula B.} Set $h_k^{(i)}(t):=\big(\frac{d}{dt}\big)^i(t(t-1))^k$ for $0\le i \le k$.  Then, we have
\[
\begin{array}{rcll}
\label{h(1/2)}
h^{(2i-1)}_{k}(\frac{1}{2})&=& 0 &(i = 1, \ldots, \lfloor(k+1)/2\rfloor),\\
h^{(2i)}_{k}(\frac{1}{2})& =& \big(\!-\!\frac{1}{4}\big)^{k-i} \frac{k!(2i)!}{(k-i)!i!} & (i = 1, \ldots, \lfloor k/2\rfloor),
\end{array}
\]
\noindent
{\it Proof of Formula B.}  We obtain
 \[
 \begin{array}{rcl}
 h^{(2i-1)}_{k}(t)& =& \sum_{j=1}^{i}\frac{k!(2i-1)!}{(i-j)!(2j-1)!(k-i-j+1)!}(t^2-t)^{k-i-j+1}(2t-1)^{2j-1}\\
 h^{(2i)}_{k}(t)&=&\sum_{j=0}^{i}\frac{k!(2i)!}{(i-j)!(2j)!(k-i-j)!}(t^2-t)^{k-i-j}(2t-1)^{2j}.
\end{array}
\]
whose verification is done by induction on $i$ and is left to the reader.  \quad $\Box$

\medskip
\noindent
{\it Proof of Formula A.} Using Formula B. we calculate as follows.
\[
\begin{array}{rcl}
H_{4L}^{(2L)}(\frac{1}{2}) &=&(12L-4) h^{(2L)}_{4L-2}(\frac{1}{2}) + (4L-2)h^{(2L)}_{4L-3}(\frac{1}{2}) \\
& =&  \frac{(4L-2)!(2L)!}{(3L-2)!L!}\big(-\frac{1}{4}\big)^{3L-2}\big\{(12L-4)- 4(3L-2)\big\} \\
&= &(-1)^{L}2^{6-6L}\frac{(4L-2)!(2L)!}{(3L-2)!L!}. 
\end{array}
\]
\[
\begin{array}{rcl}
H_{4L+1}^{(2L)}(\frac{1}{2}) &= & (12L-1) h^{(2L)}_{4L-1}(\frac{1}{2}) + (4L-1) h^{(2L)}_{4L-2}(\frac{1}{2})\\
& =& \frac{(4L-1)!(2L)!}{(3L-1)!L!}\big(-\frac{1}{4}\big)^{3L-1}\big\{(12L-1)- 4(3L-1)\big\} \\
\vspace{0.2cm}
& =& (-1)^{L+1}2^{2-6L}3\frac{(4L-1)!(2L)!}{(3L-1)!L!}.\\
% \end{array}
% \]
% \[
% \begin{array}{rcl}
H_{4L+2}^{(2L)}(\frac{1}{2}) & =& (12L+2) h^{(2L)}_{4L}(\frac{1}{2}) + (4L) h^{(2L)}_{4L-1}(\frac{1}{2}) \\
& =& \frac{(4L)!(2L)!}{(3L)!L!}\big(-\frac{1}{4}\big)^{3L}\big\{(12L+2)- 4(3L)\big\}\\
\vspace{0.2cm}
& =& (-1)^{L}2^{1-6L}\frac{(4L)!(2L)!}{(3L)!L!}.\\
% \end{array}
% \]
% \[
% \begin{array}{rcl}
H_{4L+3}^{(2L)}(\frac{1}{2}) & =& (12L+5) h^{(2L)}_{4L+1}(\frac{1}{2}) + (4L+1) h^{(2L)}_{4L}(\frac{1}{2})\\
& =& \frac{(4L+1)!(2L)!}{(3L+1)!L!}\big(-\frac{1}{4}\big)^{3L+1}\big\{(12L+5)- 4(3L+1)\big\} \\
& =& (-1)^{L+1}2^{-2-6L}\frac{(4L+1)!(2L)!}{(3L+1)!L!}.
\end{array}
\]

This completes a proof of Formula and hence that of Lemma \ref{rootsH}. \qquad \quad { $\Box$}

\bigskip
\noindent
{\it Proof of } {\bf Theorem \ref{rootsD}}  According to Lemma \ref{rootsH}, let  $1=u_1>u_2>\cdots> u_{l-1}> u_l=0$ be all roots of the polynomial $H_l^{(l-4)}(t)=0$. Let $1>v_1>v_2>\cdots>v_{l-1}>0$ be the $l-1$ roots of $H_l^{(l-3)}(t)=0$ so that one has the inequalities:
\[
u_1>v_1>u_2>v_2>\cdots> u_{l-1}> v_{l-1}>u_l.
\]
On the interval $(u_{\nu+1},v_\nu)$ the functions $H_l^{(l-4)}$ and $H_l^{(l-3)}$ have the same sign for  $\nu=1,\cdots, l-1$. 
Applying Leibniz rule to \eqref{RodriguesD3}, we obtain
\[ %\begin{equation}
\begin{array}{rcl}
\label{Leibniz}
N_{G^{dual +}_{D_{l}}}(t) = \frac{(-1)^{l-3}}{(l-2)!}\big((1-t)H^{(l-3)}_{l}(t)-(l-3)H^{(l-4)}_{l}(t) \big).
\end{array}
\] %\end{equation}
Therefore,  we have
\[\begin{array}{ll}
N_{G^{dual +}_{D_{l}}}(u_{\nu+1})N_{G^{dual +}_{D_{l}}}(v_\nu)=
-\frac{(1-{\nu+1})(l-3)}{((l-2)!)^2}H_l^{(l-3)}(u_{\nu+1})H_i^{(l-4)}(v_\nu)<0, 
\end{array}
\]
for $\nu=1, \ldots, l-1$. Thus, $N_{G^{dual +}_{D_{l}}}(t)=0$ has at least one root on each interval $(u_{\nu+1},v_\nu)$. Actually, there exists only one simple root on each interval, since $\deg(N_{G^{dual +}_{D_{l}}})=l$, and, therefore, together with the trivial root $t=1$, they should form the full set of roots of  $N_{G^{dual +}_{D_{l}}}(t)$.

This completes the proof of Theorem \ref{rootsD}.  
\end{proof}  
Applying Formula B to Rodrigues formula \eqref{RodriguesD}, we observe the following.

\medskip
{\bf Fact.} The skew-growth function $N_{G_{D_l}^{dual+}}(t)$ ($l\ge3$) is divisible by $2t-1$ if and only if $l=4$. 

\noindent

\section{Proof of Conjecture 3}

In this section, we prove the following theorem, which approve Conjecture 3.

\begin{theorem} 
\label{smallest}
For each series of types $P_{l} = A_l, B_l, D_l$, the smallest zero locus of $N_{G^{dual +}_{P_l}}(t)$ monotone decreasingly converge to $0$ 
as the rank $l$ tends to infinity.
\end{theorem}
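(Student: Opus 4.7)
For the two infinite series $A_l$ and $B_l$, I would close the argument by invoking the Jacobi polynomial identifications \eqref{JacobiA} and \eqref{JacobiB} together with classical orthogonal polynomial theory. The non-trivial zeros of $N_{G^{dual +}_{A_l}}$ and $N_{G^{dual +}_{B_l}}$ on $(0,1)$ are precisely the zeros of the shifted Jacobi polynomials $\widetilde{P}^{(1,1)}_{l-1}$ and $\widetilde{P}^{(1,0)}_{l-1}$, respectively. Strict monotone decrease of the smallest zero in $l$ is then immediate from the interlacing of zeros of consecutive Jacobi polynomials $P^{(\alpha,\beta)}_{l-1}$ and $P^{(\alpha,\beta)}_l$ (Szeg\"o, Theorem 3.3.2): between two consecutive zeros of $P^{(\alpha,\beta)}_l$ lies exactly one zero of $P^{(\alpha,\beta)}_{l-1}$, so in particular the smallest zero of $P^{(\alpha,\beta)}_l$ is strictly less than that of $P^{(\alpha,\beta)}_{l-1}$. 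Convergence of the smallest zero to $0$ follows from the density of zeros of $P^{(\alpha,\beta)}_n$ in $[-1,1]$ as $n\to\infty$ (the empirical measure converges weakly to the arcsine measure $\tfrac{1}{\pi\sqrt{1-x^2}}\,dx$, see Szeg\"o \S 8.9); under the shift $t=(x+1)/2$ this translates to the smallest zero of $\widetilde{P}^{(\alpha,\beta)}_n$ tending to $0$.

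For type $D_l$, where the Rodrigues formula \eqref{RodriguesD} is a two-term sum rather than a single derivative, I would recycle the interlacing picture developed inside the proof of Theorem \ref{rootsD}. There the $l-1$ non-trivial zeros $1>w_1>\cdots>w_{l-1}>0$ of $N_{G^{dual +}_{D_l}}$ were shown to lie, one per interval, in $(u_{\nu+1},v_\nu)$, where $u_\nu$ (resp.\ $v_\nu$) are the simple zeros of $H_l^{(l-4)}$ (resp.\ $H_l^{(l-3)}$), with $u_l=0$. In particular $0<w_{l-1}<v_{l-1}$, so it suffices to show that (a) $v_{l-1}\to 0$ as $l\to\infty$, and (b) $w_{l-1}$ is strictly decreasing in $l$.

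For (a), the plan is to expand $H_l^{(l-3)}$ via the Leibniz rule from the factorisation $H_l(t)=(t^2-t)^{l-3}Q(t)$ with $Q(t)=(l-2)-(3l-4)t+(3l-4)t^2$. Since $Q$ is quadratic, only three Leibniz terms survive; the leading one is $(l-3)!\,\widetilde{P}^{(0,0)}_{l-3}(t)\,Q(t)$, where $\widetilde{P}^{(0,0)}_{l-3}$ is the shifted Legendre polynomial (Fact \ref{Jacobi} applied with $\alpha=\beta=0$) and $Q>0$ on $[0,1]$ for $l\ge 5$. The smallest zero of this leading term therefore equals the smallest zero of $\widetilde{P}^{(0,0)}_{l-3}$, which tends to $0$ by the first paragraph; a sandwich argument against the two sub-leading Leibniz terms (themselves expressible via $\widetilde{P}^{(1,1)}_{l-4}$ and $\widetilde{P}^{(2,2)}_{l-5}$ through Fact \ref{Jacobi}) should force $v_{l-1}\to 0$. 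For (b), I would evaluate $N_{G^{dual +}_{D_{l+1}}}(w_{l-1})$ and $N_{G^{dual +}_{D_{l+1}}}(0^+)$ using \eqref{RodriguesD} and check an opposite-sign relation, so that the intermediate value theorem produces a zero of $N_{G^{dual +}_{D_{l+1}}}$ strictly below $w_{l-1}$.

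The main obstacle is the monotone decrease (b) for type $D_l$. Unlike $A_l,B_l$, where the three-term recurrence \eqref{recurrenceAB} delivers monotonicity cleanly via orthogonal polynomial interlacing, the four-term recurrence \eqref{recurrenceD} has rational coefficients whose signs fluctuate with $l$, ruling out a one-line Sturm argument. The sign comparison of $N_{G^{dual +}_{D_{l+1}}}$ at $w_{l-1}$ demands uniform-in-$l$ control of the sub-leading Leibniz terms in $H_l^{(l-3)}$ near $t=0$, and this is where I expect the bulk of the technical work to lie.
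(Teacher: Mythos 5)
Your treatment of types $A_l$ and $B_l$ is correct and is essentially the paper's own argument: the Jacobi identifications \eqref{JacobiA}--\eqref{JacobiB} plus Szeg\"o's interlacing theorem give strict monotone decrease, and the equidistribution (arcsine) result gives convergence to $0$; the paper even records the same alternative via shifted Legendre polynomials and Bruns' inequalities. The problem is type $D_l$, where your plan has two genuine gaps that you partly acknowledge. First, for step (a) the Leibniz expansion of $H_l^{(l-3)}$ is correct as an identity, but the ``sandwich argument'' is not a routine estimate: after dividing by $(l-3)!$ the three terms are $\widetilde{P}^{(0,0)}_{l-3}Q$, $(t^2-t)\widetilde{P}^{(1,1)}_{l-4}Q'$ and $\tfrac12 (t^2-t)^2\widetilde{P}^{(2,2)}_{l-5}Q''$, and since $Q'$, $Q''$ grow like $l$ while the smallest zero of $\widetilde{P}^{(0,0)}_{l-3}$ sits at distance $O(l^{-2})$ from $0$ where Jacobi polynomials with positive parameters attain endpoint values of order $l^{\alpha}$, the sub-leading terms are \emph{not} uniformly negligible; locating $v_{l-1}$ this way requires quantitative endpoint asymptotics that you have not supplied. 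Second, and more seriously, step (b) --- the strict decrease of the smallest zero $w_{l-1}$ of $N_{G^{dual+}_{D_l}}$ --- is left entirely open: there is no mechanism in your proposal for determining the sign of $N_{G^{dual+}_{D_{l+1}}}$ at the smallest zero of $N_{G^{dual+}_{D_l}}$, and as you note the four-term recurrence \eqref{recurrenceD} does not yield a Sturm-type interlacing. As written, the $D_l$ case is not proved.

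The idea you are missing is that one should not attack $D_l$ through its own Rodrigues formula at all, but reduce it to the already-settled $B$-series. The paper proves the identity \eqref{JacobiD2},
\[
N_{G^{dual +}_{D_{l}}}(t) \;=\; \frac{l-2}{2l-1}\,N_{G^{dual +}_{B_{l}}}(t) \;+\; \Bigl(\frac{l+1}{2l-1}-t\Bigr)N_{G^{dual +}_{B_{l-1}}}(t),
\]
verified coefficientwise from Table A. Since $t_{B_{l-1},l-1}\le t_{B_2,2}=1/3<\frac{l+1}{2l-1}$, both coefficients are positive on $[0,t_{B_{l-1},l-1}]$; hence $N_{G^{dual+}_{D_l}}>0$ on $[0,t_{B_l,l}]$, while at $t=t_{B_{l-1},l-1}$ the second term vanishes and the interlacing $t_{B_l,l}<t_{B_{l-1},l-1}<t_{B_l,l-1}$ forces $N_{G^{dual+}_{B_l}}(t_{B_{l-1},l-1})<0$. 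The intermediate value theorem then sandwiches the smallest zero as $t_{B_l,l}<t_{D_l,l}<t_{B_{l-1},l-1}$, which simultaneously yields monotone decrease (from $t_{D_{l+1},l+1}<t_{B_l,l}<t_{D_l,l}$) and convergence to $0$ (from the $B$-series result). This one identity replaces both of the hard steps in your plan; I would recommend redirecting the $D_l$ argument along these lines rather than trying to complete the Leibniz/sign analysis.
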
 
\begin{proof} 
Let us fix notation: for type $P_{l} = A_l, B_l, D_l$, let $t_{P_{l}, \nu}, \nu=1, 2, \ldots, l$, be the zeros of $N_{G^{dual +}_{P_{l}}}(t)$ in decreasing order (i.e. $1 = t_{P_{l}, 1} > t_{P_{l}, 2}  > \cdots > t_{P_{l}, l} > 0$).\\

%\medskip  
\noindent
{\bf I.  Case for types $A_l$ and $B_l$.}  

In view of Jacobi polynomial expressions \eqref{JacobiA} and \eqref{JacobiB}, up to the first root $t_{P_l,1}=1$, the zeros $t_{l,\nu}$ and $t_{B_l,\nu}$ ($\nu=2,\cdots,l$) are equal to the zeros of $\tilde{P}_{l-1}^{(1,1)}$ and $\tilde{P}_{l-1}^{(1,0)}$, respectively. Then, the following fact is known (see \cite{[Sz]} Theorem 3.3.2.).

\begin{fact} 
The system $\{ t_{P_{l}, \nu} \}_{\nu=2}^{l}$ alternates with the system $\{ t_{P_{l+1}, \nu} \}_{\nu=2}^{l+1}$, that is,
\[
t_{P_{l+1}, \nu} > t_{P_{l}, \nu} > t_{P_{l+1}, \nu+1}, \,\,\,\,(\nu= 2, \ldots, l).
\]
\end{fact}
In particular, this implies that both sequences $\{t_{l,l}\}_{l=1}^\infty$ and $\{t_{B_l,l}\}_{l=2}^\infty$ are decreasing monotonously. On the other hand, recall a fact (see \cite{[Sz]} Theorem 6.1.2).%show that the sequences converge to 0.
\begin{fact}
Let $I$ be sub-interval of $[0,1]$ of positive measure. Then, if $l$ is sufficiently large, there exists at least one $1<\nu\le l$ such that  $t_{P_l,\nu}\in I$.
\end{fact}
Applying this Fact to intervals $I=[0,\varepsilon]$ for small $\varepsilon>0$, we complete the proof of Theorem \ref{smallest}. for types $A_l$ and $B_l$. \quad  $\Box$

\bigskip
Let us give an alternative proof of Theorem \ref{smallest} for types $A_l$ and $B_l$ by describing a  distributions of all roots. Namely, we show that the density of roots is proportional to 
$dt/\sqrt{t(1-t)}$ (whose precise meaning is given in Fact \ref{Bruns}) by sandwiching the roots of types $A_l$ and $B_l$  by the roots of shifted Legendre polynomial $\widetilde P_l(t):=P_l^{(0,0)}(2t-1)$ (the shifting of Legendre polynomial $P_l^{(0,0)}(t)$).  
% which desribes an aproximate of the ditribution of the roots of types $A_l$ and $B_l$, in particular, the  smallest root of shifted Legendre polynomial gives an approximation of the distribution of all roots.

\begin{proposition}
1.  For type $A_l$, the following identity 
%between the skew-growth function $N_{G^{dual +}_{{A_l}}}(t)$ and the shifted Legendre polynomial $\widetilde{P}_{l}(t)$ 
holds for $l \in\Z_{>0}$:
\begin{equation}
\label{A-Legendre}
(t N_{G^{dual +}_{{A_l}}}(t))' = N_{G^{dual +}_{{A_l}}}(t) + t N'_{G^{dual +}_{{A_l}}}(t) = (-1)^{l}\widetilde{P}_{l}(t).
\end{equation}
2.  For type $B_l$, the following identity 
%between the skew-growth function $N_{G^{dual +}_{B_{l}}}(t)$ and the shifted Legendre polynomial $\widetilde{P}_{l}(t)$ 
holds for $l \in\Z_{\ge2}$:
\begin{equation}
\label{B-Legendre}
N_{G^{dual +}_{B_{l}}}(t) + (t/l)N'_{G^{dual +}_{B_{l}}}(t) = (-1)^{l}\widetilde{P}_{l}(t).
\end{equation}
\end{proposition}
\begin{proof}
Using the recurrence relation:
{\small $%\[
(l+2)\widetilde{P}_{l+2}(t) = (2l+3)(2t-1)\widetilde{P}_{l+1}(t) - (l+1)\widetilde{P}_{l}(t)
$}%\] 
\ \ on shifted Legendre polynomial (c.f.\ \cite{[Sz]} 4.5.1, see also \S2), we obtain an explicit expression of the shifted Legendre polynomial for $l\in\Z_{\ge0}$.
 \begin{equation}
 \begin{array}{c}
\widetilde{P}_{l}(t) = (-1)^{l}\sum_{k=0}^{l} (-1)^{k}\frac{(l+k)!}{(l-k)!(k!)^2
}t^k.
\end{array}
\end{equation}
By comparing this expression with  the expressions in Table A of Appendix I, we obtain \eqref{A-Legendre} and \eqref{B-Legendre}.
\end{proof}
Recall a fact on the distribution of the zeros of $\widetilde{P}_l(t)$ (\cite{[Sz]} Theorem 6.21.3).
\begin{fact} (Bruns \cite{[Bru]})
\label{Bruns}
%1. The shifted Legendre polynomial $\widetilde{L}_{l}(t)$ of degree $l$ has $l$ number of simple real roots on the interval $(0,1)$.\\
 Let $\tilde{x}_{\nu} = \tilde{x}_{l, \nu}, \nu=1, 2, \ldots, l$, be the zeros of $\widetilde{P}_{l}(t)$ in decreasing order.    
 %(i.e. $1 > \tilde{x}_{1} > \tilde{x}_{2}  > \cdots > \tilde{x}_l > 0$).
  Let $\theta_l= \theta_{l, \nu} \in (0, \pi), \nu=1, 2, \ldots, l$, be the real number defined by
\[
\cos \theta_{\nu}= 2 \tilde{x}_{\nu} - 1.
\]
Then, the inequalities hold as follows:\quad 
$ %\[
\frac{\nu - \frac{1}{2}}{l + \frac{1}{2}}\pi < \theta_{\nu} < \frac{\nu}{l + \frac{1}{2}}\pi \,\,\,\,\quad (\nu=1, 2, \ldots, l). 
$ %\]
\end{fact}

\medskip
Recall that $t_{P_{l}, \nu}, \nu=1, 2, \ldots, l$ are the zeros of $N_{G^{dual +}_{P_{l}}}(t)$ in decreasing order. Let $t'_{P_{l}, \nu}, \nu=1, 2, \ldots, l-1$, be the zeros of $N'_{G^{dual +}_{P_{l}}}(t)$ in decreasing order
% (i.e. $1 > t'_{P_{l}, 1} > t'_{P_{l}, 2}  > \cdots > t'_{P_{l}, l-1} > 0$). We put 
and set $t'_{P_{l}, l} := 0$. From Theorem \ref{rootsP}, we see
\[
1 = t_{P_{l}, 1} > t'_{P_{l}, 1} > t_{P_{l}, 2}  > \cdots > t'_{P_{l}, l-1} > t_{P_{l}, l} > t'_{P_{l}, l}=0.
\]

\begin{proposition}
\label{distributionAB}
For type $P_{l} = A_l, B_l$, the inequalities hold as follows:
\[
1 = t_{P_{l}, 1} > \tilde{x}_{1} > t_{P_{l}, 2}  > \cdots > \tilde{x}_{l-1} > t_{P_{l}, l} > \tilde{x}_{l} > 0.
\]
\end{proposition}
\begin{proof}
We consider $2l-1$ open intervals $(t'_{P_{l}, l}, t_{P_{l}, l}), (t_{P_{l}, l}, t'_{P_{l}, l-1}), (t'_{P_{l}, l-1}, t_{P_{l}, l-1})$,\\ $
\ldots, (t_{P_{l}, 2}, t'_{P_{l}, 1}), (t'_{P_{l}, 1}, t_{P_{l}, 1})$. On the intervals $(t'_{P_{l}, l-\nu}, t_{P_{l}, l-\nu}), \nu=0, \ldots, l-1$, the polynomials $N_{G^{dual +}_{P_{l}}}(t)$ and $N'_{G^{dual +}_{P_{l}}}(t)$ have the opposite sign. Moreover, due to the identities \eqref{A-Legendre}  and \eqref{B-Legendre}, we can show $\widetilde{P}_{l}(t'_{P_{l}, l-\nu})\widetilde{P}_{l}(t_{P_{l}, l-\nu})<0, \nu=0, \ldots, l-1$. Thanks to intermediate value theorem, for the interval $(t'_{P_{l}, l-\nu}, t_{P_{l}, l-\nu})$ there exists a positive integer $i_{\nu}$ such that $\tilde{x}_{i_{\nu}} \in (t'_{P_{l}, l-\nu}, t_{P_{l}, l-\nu})$. Since the polynomial $\widetilde{P}_{l}(t)$ is of precise degree $l$, we conclude that $i_{\nu}= l-\nu$. 
%This completes a proof of Proposition 5.6.
\end{proof}
Combining Fact \ref{Bruns} with Proposition \ref{distributionAB}, we obtain a description of a distribution of roots of 
$N_{G^{dual +}_{{A_l}}}(t)$ and $N_{G^{dual +}_{B_{l}}}(t)$. This implies that the smallest root $t_{P_l,l}$ is given by $\cos^2(\theta_{P_l,l}/2)$ for 
$\frac{l-1/2}{l+1/2}\pi <\theta_{P_l,l}<\frac{l}{l+1/2}\pi$,   showing Theorem 5.1. % for types $A_l$ and $B_l$.
% \begin{theorem} 
% For the infinite series cases of type $P_{l} = A_l, B_l, D_l$, the smallest zero locus of $N_{G^{dual +}_{P}}(t)$ monotone decreasingly converge to $0$ 
% as the rank $l$ tends to infinity.
% \end{theorem} 
% \begin{proof}
% First, we consider the cases $P_{l} = A_l, B_l$. Thanks to Fact 9.3, we show that the smallest zero $\tilde{x}_{l}$ of $\widetilde{L}_{l}(t)$ converges to $0$ as the rank $l$ tends to infinity. Hence, due to Proposition 9.4, the smallest zero $t_{P_{l}, l}$ converges to $0$ as the rank $l$ tends to infinity. Moreover, due to Proposition 7.1, we conclude that the smallest zero $t_{P_{l}, l}$ monotone decreasingly converges to $0$.\\
%\end{proof}

\medskip  
\noindent
{\bf II.  Case for type $D_l$.}  

Let us give another expression of  $N_{G^{dual +}_{D_{l}}}(t)$ for $l\ge4$.
%
%For $l \in \Z_{\ge4}$, the following identity holds
\begin{equation}
\begin{array}{c}
\label{JacobiD2}
N_{G^{dual +}_{D_{l}}}(t) = \frac{l-2}{2l-1}N_{G^{dual +}_{B_{l}}}(t) + \big(\frac{l+1}{2l-1}-t\big)N_{G^{dual +}_{B_{l-1}}}(t).
\end{array}
\end{equation}
%\end{lem}
\begin{proof}
From Table A, we compute the coefficient of $(-t)^k$ on the right hand side.
{\footnotesize\[
\frac{l-2}{2l-1}\cdot\frac{l(l+k-1)!}{(l-k)!k!k!} + \frac{l+1}{2l-1}\cdot\frac{(l-1)(l+k-2)!}{(l-1-k)!k!k!} + \frac{(l-1)(l+k-3)!}{(l-k)!(k-1)!(k-1)!}
\]}
{\footnotesize\[
= \frac{(l+k-3)!}{(l-k)!k!k!}\biggl\{  l(l-1)(l+k-2)+(l-2)k(k-1) \biggr\}.\,\,\,\,\,\,\,\,\,\,\,\,\,\,\,\,\,\,\,\,\,\,\,\,\,\,\,\,\,\,\,\,\,\,\,\,\,\,\,\,\,\,\,\,\,\,\,\,\,\,\,\,\,\,\,\,\,
\]}
This coincides with the coefficient of $(-t)^k$ on the left hand side in Table A.
\end{proof}
Remark that Fact 5.2.\ implies the inequlity $t_{B_l,l}< t_{B_{l-1},l-1}\le t_{B_2,2}=1/3$ for all $l\ge3$.
 Therefore, the second coefficient $\frac{l+1}{2l-1}-t$ of the formula \eqref{JacobiD2} takes positive values on the interval $ [0, t_{B_{l-1}, l-1}]$. Thanks to \eqref{JacobiD2}, we have that on the interval $[0, t_{B_{l}, l}]$ the value $N_{G^{dual +}_{D_{l}}}(t)$ is positive, and, in particular,  $N_{G^{dual +}_{D_{l}}}(t_{B_{l}, l})>0$. On the other hand, since $t_{B_l,l}<t_{B_{l-1},l-1}<t_{B_l,l-1}$ (recall Fact 6.2), we have $N_{G^{dual +}_{D_{l}}}(t_{B_{l-1}, l-1})=\frac{l-2}{2l-1} N_{G^{dual +}_{B_{l}}}(t_{B_{l-1}, l-1}) <0$. Then,  due to intermediate value theorem, we conclude that the smallest zero $t_{D_{l}, l}$ satisfies the following inequality
\[
 t_{B_{l}, l} <t_{D_{l}, l} < t_{B_{l-1}, l-1}.
\]
Since the sequence $\{t_{B_l,l}\}_{l=4}^{\infty}$ is decreasing monotone and converges to 0 as the rank $l$ tends to infinity, so is the sequence $\{t_{D_{l}, l}\}_{l=0}^{\infty}$.

This completes the proof of the case of the sequence $D_l$ and, hence, of Theorem \ref{smallest}.
\end{proof}

%\newpage

\section{Appendix I. } % \\ Table of skew-growth-functions of dual Artin monoids of finite type}

We give tables of explicit formulae of skew-growth functions of dual Artin monoids. Table A contains the types of three infinite series $A_l \ (l\ge1),\ B_l\ (l\ge2)$ and $D_l\ (l\ge4)$, and Table B contains the remaining exceptional types $E_6, E_7, E_8, F_4$ and $G_2$ and non-cristallographic types $H_3, H_4$ and $I_2(p)$. 
% As an application, we give a partial answer to Conjecture 1.

\bigskip
\centerline{\bf \large  Table A\quad }
\vspace{-0.2cm}
\[ %\begin{equation}
\begin{array}{l}
N_{G^{dual +}_{A_l}}(t) \quad =\quad \sum_{k=0}^{l} (-1)^{k}\frac{1}{l}\binom {l}{k}\binom {l+k}{k+1}t^k,\,\,\,\,\,\,\,\,\,\,\,\,\,\,\,\,\,\,\,\,\,\,\,\,\,\,\,\,\,\,\,\,\,\,\,\,\,\,\,\,\,\,\,\,\,\,\,\,
\end{array}
\] %\end{equation}
\[ %\begin{equation}
\begin{array}{l}
N_{G^{dual +}_{B_l}}(t) \quad = \quad \sum_{k=0}^{l} (-1)^{k}\binom {l}{k}\binom {l+k-1}{k}t^k,\,\,\,\,\,\,\,\,\,\,\,\,\,\,\,\,\,\,\,\,\,\,\,\,\,\,\,\,\,\,\,\,\,\,\,\,\,\,\,\,\,\,\,\,\,\,\,\,
\end{array}
\] %\end{equation}
\[ %\begin{equation}
\begin{array}{l}
N_{G^{dual +}_{D_l}}(t)  \quad =\quad  \sum_{k=0}^{l} (-1)^{k} \big( \binom {l}{k}\binom {l+k-2}{k} + \binom {l-2}{k-2}\binom {l+k-3}{k}\big) t^k. 
\end{array}
\] %\end{equation}
\noindent
{\small The skew-growth functions for types $A_l$ and $B_l$ are obtained from F-triangles of Chapoton (\cite{[Ch]} (34) and (46)) by substituting the variables $(x,y)$ by $(-t,0)$).  The coefficients of the skew-growth functions for the type $D_l$ is due to \cite{[A-T]} \S6 Corollary 6.3.}

\bigskip
\centerline{\bf \large  Table B\quad }
\vspace{-0.3cm}
\[
\begin{array}{rcl}
N_{G^{dual +}_{E_6}}(t)& = & 1 - 36 t + 300 t^2 - 1035 t^3 + 1720 t^4 - 1368 t^5 + 418 t^6 \\
N_{G^{dual +}_{E_7}} (t)& = &1\! -\! 63 t\! +\! 777 t^2 \! -\! 3927 t^3 \!+\! 9933 t^4 \!-\! 13299 t^5 \!+\! 9009 t^6 \!-\! 2431t^7 \\
N_{G^{dual +}_{E_8}} (t)& = &\!\!\!\!\!\!{\scriptsize1\!\! -\!\! 120 t \!\! +\!\! 2135 t^2 \!\!\! -\!\! 15120 t^3 \!\!\! +\!\! 54327 t^4 \!\!\!- \!\! 108360 t^5 \!\!\! + \!\!121555 t^6 \!\!\! - \!\! 71760 t^7 \!\!\! + \!\! 17342 t^8}\\
N_{G^{dual +}_{F_4}} (t)& = & 1 - 24 t + 101 t^2 - 144 t^3 + 66 t^4 \\
N_{G^{dual +}_{G_2}} (t)& = & 1 - 6 t + 5 t^2 \\ %= (1-t)(1-5t) \\
N_{G^{dual +}_{H_3}} (t)& = & 1 - 15 t + 35 t^2 - 21 t^3 \\
N_{G^{dual +}_{H_4}} (t)& = & 1 - 60 t + 307 t^2 - 480 t^3 + 232 t^4 \\
N_{G^{dual +}_{I_2(p)}} (t)& = & 1 - p t + (p-1) t^2 
\end{array}
\]
\noindent
{\small 
The skew-growth functions for exceptional types $E_6, E_7, E_8, F_4$ and $G_2$ are obtained from the formula §5 (9) in \cite{[A-T]}, to which one applies the data in §6 Table 1. 
The skew-growth functions for non-crystallographic types $H_3, H_4$ and $I_2(p)$ are obtained as special cases of the data of \cite{[Ar]} \S5.3. Figure 5.14, provided by 
F.\ Chapoton.
}

\section{Appendix II}
 \medskip
 \noindent
 {\bf Zero loci of $N_{G^{dual +}_{P}}(t)=0$ in the complex plane for type $P = A_{20}, B_{20}, D_{20}$ and $E_8$. The zeroes are indicated by $+$.} 

\bigskip
\noindent
\quad\  \includegraphics{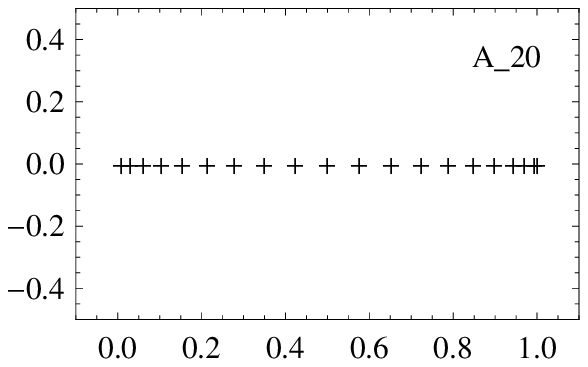}
\includegraphics{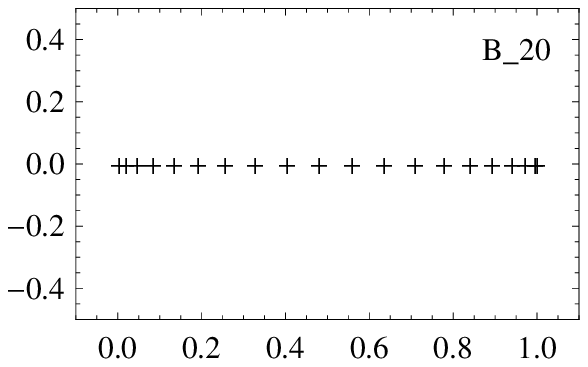}\\

\smallskip
\!\!\! \includegraphics{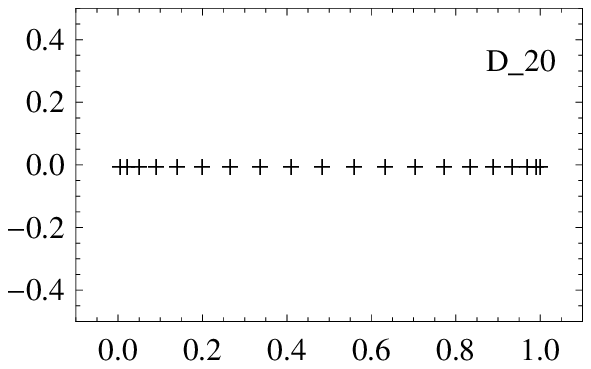}
\includegraphics{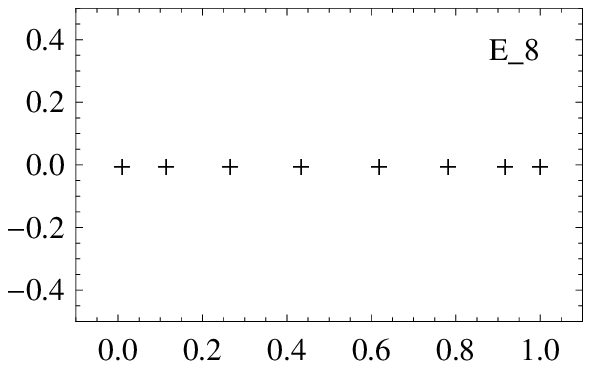}

\section {Appendix III}
In the present Appendix, we first recall David Bessis's description \cite{[Be]} of a dual Artin monoid $G^{dual +}_{P}$ and its relationship with the non-crossing partition. % In order to get explicit formula for the skew-growth functions for them, 
Then, we identify the skew-growth function of the dual Artin monoid with the characteristic polynomial of the lattice of non-crossing partitions of type $P$. We use the notation of \S1 freely.

   Let $(\overline{G}_P,S)$ be a Coxeter system of finite type $P$ and let $T\subset \overline{G}_P$ the set of all reflections as in \S1. %An element in $\overline{G}_P$ is called a reflection if it is conjugate to an element  $i\in S$ for some $1\le i\le l$, and we denote by $T$ the set of all reflections. 
   For an element $g\in \overline{G}_P$, the length $l_T(g)\in \Z_{\ge0}$ is defined as the minimal length of words expressing $g$ by $T$. Clearly the length function $l_T$ is subadditive. For $g,h\in \overline{G}_P$, we say $g$ divides $h$ from the left (with respect to $T$), and denote $g|^l_Th$ or $g\le_T h$, if $l_T(g)+l_T(g^{-1}h)=l_T(h)$. Similarly, we say $g$ divides $h$ from the right and denote $g|^r_Th$ or $h\ge_T g$, if $l_T(g)+l_T(hg^{-1})=l_T(h)$. Since $T$, and hence $l_T$, is invariant by conjugation, one has $g \le_T g' \Leftrightarrow g'\ge_T g$ for any $g,g'\in \overline G_P$. 
% In particular, any element $h\in \overline{G}_P$ is balanced in the sense that $g|^l_Th \Leftrightarrow g|^r_Th$ for any $g $. 
So, we define for any $h\in \overline{G}_P$ the interval 
   \begin{equation}
   [1,h]_T:=\{g\in \overline{G}_P \ |\  g|^l_Th\} =\{g\in \overline{G}_P \ |\ g|^r_Th\} .
   \end{equation}
On the interval, a pre-monoid structure\footnote
{A set $A$ will be called a pre-monoid if the product is defined from a subset of $A\times A$ to $A$ which satisfies: i) unitarity: $\exists 1\in A$ such that for any $g\in A$, $1g$ and $g1$ are defined as $g$, and ii) associativity: products $gg'$ and $(gg')g"$ are defined if and only if products $g'g"$ and $g(g'g")$ are defined and $(gg')g"=g(g'g")$ for any $g,g',g"\in A$
(see \cite{[Be]}).}
is defined: namely, for a pair $(g,g')\in [1, h]_T\times[1, h]_T$ a product $g\cdot g':=gg'$ is defined in the pre-monoid only when $gg'\in [1, h]_T$ and $l_T(gg')=l_T(g)+l_T(g')$. 
Let $M: \mathcal{C}_{preMonid} \to \mathcal{C}_{Monid}$ be the left adjoint functor of the natural embedding $\mathcal{C}_{Monoid} \to \mathcal{C}_{preMonoid}$ of the category of monoids to the category of pre-monoids.\footnote
{Let $A$ be a pre-monoid. Then, $M(A)$ is explicitly given by the monoid generated by elements of $A$ and defined by the relations $g\cdot g' =gg'$ for all $g,g'\in A$ whenever their product is defined in the pre-monoid $A$ (here we denote by $g\cdot g'$ also the product of $g,g'$ in the monoid $M(A)$). Similarly, we denote by $G:  \mathcal{C}_{Monoid} \to \mathcal{C}_{Group}$ the left adjoint functor of the natural embedding $\mathcal{C}_{Group} \to \mathcal{C}_{Monoid}$ of the category of groups to the category of monoids.
}
%
% Then the "monoidification" $M([1,h]_T)$ is defined as the monoid generated by the elements of $[1,h]_T$ and defined by the relations $g\cdot g' =gg'$ for all $g,g'\in [1,h]_T$ whenever their product is defined in the pre-monoid $[1, h]_T$ (here we denote by $g\cdot g'$ also the product of $g,g'$ in the monoid $M([1, h]_T)$.
%\footnote{
%The monoidification $M$ can be regarded as the left adjoint functor to the natural embedding functor from the category of monoids to the category of pre-monoids.  Similarly, the groupification $G$ in following Theorem can be regarded as the left adjoint functor to the natural embedding functor from the category of group to the category of pre-monoid. 
%}
The monoid $M([1, h]_T)$ is cancellative in the sense that for $m\in M([1, h]_T)$ and $g,g'\in [1, h]_T$ if $g\cdot m=g'\cdot m$ or $m\cdot g=m\cdot g'$ holds in $M([1, h]_T)$, then $g=g'$ holds in $[1,h]_T$ ({\it Proof.} The assumption implies
$\bar{g}\bar{m}=\bar{g}'\bar{m}$ or $\bar{m}\bar{g}=\bar{m}\bar{g}'$  holds in $\overline{G}_P$ (here, we denote by $\bar{m}$, $\bar{g}$ and $\bar{g}'$ the images in $\overline{G}_P$ of  $m$, $g$ and $g'$, respectively), and, therefore $g=g'$ holds in 
$\overline{G}_P$ and hence in $[1,h]_T$. Here, the natural composition: $[1, h]_T\! \to\! M([1, h]_T) \to \overline{G}_P$ is injective).

Since the Coxeter diagram associated with the simple generator system $S$ is a tree, we have a bipartite decomposition $S=L\coprod R$ (unique up to a transposition of $L$ and $R$) such that elements in $L$ (resp. $R$) are mutually commutative.  Then a {\it Coxeter element} $c$ in the Artin monoid $G^+_P$ is defined to be the product of the form $c=c_Lc_R$ where $c_L:=\prod_{a\in L}a$ and $c_R:=\prod_{a\in R}a$. The projection $\bar c$ of $c$ in the Coxeter group $\overline{G}_P$ is the Coxeter element in the classical sense. Clearly $\bar c$ is divisible (in the sense of $|_T$) by all elements of $T$ so that $T\subset [1,\bar{c}]_T$. We call two reflections $r,s\in T$ a {\it non-crossing pair} if $\bar r \bar s\le_T \bar{c}$ holds. Then we set $s^*:=rsr^{-1}\in T$. 
% Consider the interval $[1,\bar c]_T :=$ $\{g\in\overline{G}_P\mid g\le_T \bar c\}$ as a pre-monoid and consider the associated monoid $M([1,\bar c]_T)$.

\begin{theorem} (Bessis \cite{[Be]}).  
\noindent
{\bf 1.} The monoid $M([1,\bar c]_T)$ is generated by the set $T$ and defined by the quadratic relations \eqref{dual relation}
for all non crossing pairs $r,s\in T$. %, i.e.  $rs \le_T \bar c$ holds, and set $s^*:=rsr^{-1}\in T$. 

\medskip
\noindent
{\bf 2.} Set ${\bf T}:=\cup_{k\in \Z} (c^k {\bf S} c^{-k}) \subset G_P$. Then the natural projection $\pi: G_P\to \overline{G}_P$ induces a bijection ${\bf T} \simeq T$ and, further, an isomorphism from the Artin group $G_P$ to the group 
% \[\begin{array}{rcl}
$G(M([1,\bar c]_T))$ % (i.e.  the group generated by the set $[1,c]_T$ and defined by the relations $g\cdot g'=gg'$ for  all definable pairs of $[1,c]_T$) 
$\simeq \big<  T \mid \text{relations given in {\bf 2.}}\big>_{gr}$ (see Footnote 4 for $G(\cdot)$). 

% \medskip
% \noindent
% {\bf 3.} The natural homomorphism $M([1,c]_T) \to G([1,\bar c]_T)$ is injective.
\end{theorem}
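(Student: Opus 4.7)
The plan is to prove Bessis's theorem in two stages: first establish the Garside-type structure on $[1,\bar c]_T$ that identifies $M([1,\bar c]_T)$ with a monoid having a quadratic presentation on $T$, then promote this to a presentation of the Artin group $G_P$ via the group-completion functor $G$.

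For Part 1, the foundational step is to verify that $([1,\bar c]_T,\le_T)$ is a lattice. Via Carter's formula $l_T(g)=l-\dim\mathrm{Fix}(g)$, divisibility translates to a geometric condition on fixed subspaces and motion vectors, from which meet and join can be constructed. With the lattice property in hand, one checks that the pre-monoid axioms in Footnote 3 are satisfied: products $g\cdot g'$ are exactly those with $l_T(gg')=l_T(g)+l_T(g')$, and associativity is automatic from subadditivity. Applying the left adjoint functor $M$ then yields a cancellative monoid (as already noted in the excerpt) whose presentation on the generating set $[1,\bar c]_T\setminus\{1\}$ has defining relations $g\cdot g'=gg'$ for every defined pre-monoid product. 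To cut down the generating set to $T$ and obtain the quadratic relations, note that every element of $[1,\bar c]_T$ is by definition of $l_T$ a product of reflections, so $T$ already generates $M([1,\bar c]_T)$. The remaining task is to show that every relation between reflection words follows from the quadratic moves $rs=s^*r$ with $s^*:=rsr^{-1}$ valid whenever $\bar r\bar s\le_T\bar c$: this is the Hurwitz-transitivity of the action on reduced $T$-decompositions of elements of $[1,\bar c]_T$, proved by an induction on $l_T$ reducing the problem to the length-$2$ case in which the relation is tautological.

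For Part 2, the bijection $\pi|_{\bf T}:{\bf T}\to T$ follows by observing that $\pi$ sends each conjugate $c^k a_ic^{-k}$ to the reflection $\bar c^k\bar a_i\bar c^{-k}\in T$, and that these conjugates in the Artin group are pairwise distinct because the degree map separates them whenever their images coincide (using that $c^k\bar{\mathbf{S}}c^{-k}$ exhausts $T$ in the Coxeter group as $k$ varies, a classical consequence of the action of the Coxeter element on the roots). To establish the isomorphism $G_P\cong G(M([1,\bar c]_T))$, I would construct mutually inverse homomorphisms. One direction sends each $a_i\in{\bf S}$ to its image in $M([1,\bar c]_T)\hookrightarrow G(M([1,\bar c]_T))$, and one checks that the alternating Artin braid relations \eqref{Artin relation} hold in the target by expanding both sides as products of reflections in $[1,\bar c]_T$ and applying the quadratic relations. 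The reverse map sends each $t\in T$ to its unique lift in ${\bf T}\subset G_P$, and the quadratic relations \eqref{dual relation} hold in $G_P$ by direct computation using \eqref{T}. Mutual inversion is then a formal consequence of the presentations.

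The hardest single ingredient is the lattice property of $([1,\bar c]_T,\le_T)$: in Bessis's original treatment this required case-by-case verification for the exceptional types, and uniform proofs emerged only later from Brady--Watt and others. A close second is the Hurwitz-transitivity step, which underpins the reduction to the quadratic presentation; its proof by induction on $l_T$ rests on knowing that every cover in the interval lattice is realized by multiplication by a reflection, which in turn uses the lattice structure established in the first step. Once these two geometric inputs are in place, both parts of the theorem follow by routine formal manipulation of generators and relations.
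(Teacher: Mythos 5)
The paper offers no proof of this theorem: its ``proof'' is the bare citation \cite{[Be]} (2.1.4, 2.2.2, 2.2.5, 2.3.3), so the comparison must be with Bessis's original argument. Your outline does identify the correct architecture --- Hurwitz transitivity on reduced $T$-decompositions as the engine behind the quadratic presentation of $M([1,\bar c]_T)$, and a pair of mutually inverse homomorphisms for Part 2 --- but it contains one genuine error and one place where the entire difficulty is asserted away.

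The error is in the bijection $\pi|_{\bf T}:{\bf T}\to T$. Every element of ${\bf T}$ has degree $1$ by \eqref{T}, so the degree map cannot ``separate'' any two of them; and the logic is inverted besides: injectivity of $\pi|_{\bf T}$ requires that two conjugates $c^k a_i c^{-k}$ and $c^m a_j c^{-m}$ with the \emph{same} image in $\overline{G}_P$ be \emph{equal} in $G_P$, whereas you argue they are ``pairwise distinct,'' which is the opposite of what is needed and would in fact contradict injectivity. This identification is a nontrivial statement about the Artin group (Bessis 2.2.2), not a formal consequence of the classical fact that $\bigcup_k \bar c^k S \bar c^{-k}=T$. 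Relatedly, the sentence ``one checks that the alternating Artin braid relations hold in the target by expanding both sides as products of reflections and applying the quadratic relations'' conceals essentially all of the content of Part 2: Bessis establishes the equivalence of the classical presentation \eqref{Artin relation} and the dual presentation \eqref{dual relation} through a chain of intermediate presentations attached to the bipartite decomposition $S=L\coprod R$, each step being a Tietze transformation; no direct expansion argument is known. A smaller quibble: Hurwitz transitivity does not rest on the lattice property of $[1,\bar c]_T$ --- that every cover in the interval is realized by a reflection is immediate from the definition of $\le_T$ via $l_T$ --- and indeed the lattice property enters only for the Garside structure in the subsequent theorem, not for the presentation in Part 1.
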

\begin{proof}   See [Be. 2.1.4, 2.2.2, 2.2.5, 2.3.3].
\end{proof} 
% We shall call $M([1,\bar c]_T)$ the {\it dual Artin monoid of type} $P$, denote it by $G_{P^{dual}}^+$.  A pair $r,s\in T$ satisfying $rs\le_Tc$ is called {\it non-crossing}, and \eqref{dual relation} is called a {\it dual Artin braid relation}. 
The relations \eqref{dual relation} are called the {\it dual Artin braid relations}. We remark that 
if a pair $r$ and $s$ is {\it non-crossing}, then the pair $s$ and  $^*\!r\!\!:=s^{-1}rs$ is also non-crossing (since $\bar s\ ^*\!\bar r =\bar r\bar s\le_T \bar c$). Therefore, $s\cdot ^*\!r=r\cdot s$ is also a dual relation.
Let us  show that the monoid $M([1,c]_T)$, which we shall call the {\it dual Artin monoid}, is a submonoid of the Artin group. To see that, we use a distinguished property of the monoid.

\begin{theorem} (Bessis \cite{[Be]}).  
{\bf 1.}  The pre-monoid $[1,\bar c]_T$ is a lattice.

\medskip
\noindent
{\bf 2.}  $M([1,\bar c]_T)$ is a lattice, and hence it is a Garside monoid.

\medskip
\noindent
{\bf 3.}   The dual Artin monoid $G^{dual +}_{P}:=M([1,\bar c]_T)$ is naturally isomorphic to the submonoid of the Artin group $G_P$ generated by ${\bf T}$.
\end{theorem}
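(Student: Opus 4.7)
The plan is to prove the three claims in order, since each builds on the preceding one. Throughout I would use freely the fact (already recorded above) that the natural map $[1,\bar c]_T \hookrightarrow M([1,\bar c]_T) \to \overline{G}_P$ is injective, so that divisibility in the monoid reflects the partial order $\le_T$ on the interval.

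For claim \textbf{1}, my first step would be to give a more workable description of the order $\le_T$ on $[1,\bar c]_T$. Concretely, $g \le_T h$ exactly when some reduced $T$-factorization of $h$ begins with a reduced factorization of $g$, and the Hurwitz braid group action on the set $\mathrm{Red}_T(\bar c)$ of reduced factorizations of $\bar c$ is transitive; this is the combinatorial backbone. I would then establish existence of meets: given $g, g' \le_T \bar c$, look at any common initial segment of reduced factorizations of $\bar c$ passing through $g$ and $g'$ respectively, and show, via the Hurwitz action, that a unique maximal such common prefix exists. Existence of joins is more delicate; the plan would be to argue it by duality, using the involution $x \mapsto x^{-1}\bar c$ on $[1,\bar c]_T$ (which reverses the order because $\bar c$ has $T$-length $\ell = \mathrm{rank}(P)$), so that joins correspond to meets of the dual interval and therefore also exist.

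For claim \textbf{2}, once the pre-monoid $[1,\bar c]_T$ is a lattice, the monoid $M([1,\bar c]_T)$ inherits the lattice structure: any two generators have a least common multiple in the generating set, and the general lcm of two monoid elements is obtained by iterating this construction and using cancellativity (noted above). I would verify the Garside axioms by taking $\bar c$ (lifted to the monoid) as the Garside element: its left and right divisors coincide with $[1,\bar c]_T$, and this finite set generates the monoid; conjugation by $\bar c$ permutes the generating set because $c^{-1} \bf S c \subset \bf T$.

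For claim \textbf{3}, Garside theory guarantees that $M([1,\bar c]_T)$ embeds into its group of fractions. It remains to identify that group with the Artin group $G_P$. In one direction, the quadratic relations \eqref{dual relation} hold inside $G_P$ between elements of $\bf T$ (this is the content of the lift of Theorem 8.1.1 to the Artin group, using that conjugation by $c$ in $G_P$ projects correctly to conjugation by $\bar c$ in $\overline G_P$). In the other direction, I would produce an explicit word in $\bf T$ for each simple generator in $\bf S$ and check that all Artin braid relations \eqref{Artin relation} are consequences of the dual relations, giving a surjection $G(M([1,\bar c]_T)) \twoheadrightarrow G_P$ inverse to the obvious map; combining this with Garside embedding yields the asserted isomorphism.

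The main obstacle is unambiguously the existence of joins in claim \textbf{1}: meets arise naturally from common prefixes of reduced factorizations, but joins require controlling how two unrelated elements can be extended inside $[1,\bar c]_T$. The duality trick via $x \mapsto x^{-1}\bar c$ reduces joins to meets, yet justifying that this really is an order-reversing involution on $[1,\bar c]_T$ (as opposed to merely on $\overline G_P$) uses in an essential way that $\bar c$ has maximal $T$-length equal to $\mathrm{rank}(P)$ and that $T$-length is subadditive with equality on factorizations of $\bar c$; these facts are themselves non-trivial and depend on $P$ being of finite type.
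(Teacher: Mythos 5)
The paper does not actually prove this theorem: it is attributed to Bessis, and the ``proof'' consists of pointers to the literature --- part 1 to Bessis's case-by-case verification [Be, Fact 2.3.1] and to the case-free argument of Brady--Watt, parts 2 and 3 to the formal Garside-theoretic machinery of [B-S], [Be, Theorem 2.3.2]. Your sketches of parts 2 and 3 are reasonable outlines of exactly that machinery (lcm's of generators propagate to the whole monoid by cancellativity; a Garside monoid embeds in its group of fractions; the group of fractions is identified with $G_P$ by exhibiting each presentation inside the other), so there is no real divergence there.

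The genuine gap is in part 1, which is the deep content of the theorem. Your plan for meets --- ``look at any common initial segment of reduced factorizations of $\bar c$ passing through $g$ and $g'$ and show, via the Hurwitz action, that a unique maximal such common prefix exists'' --- is a restatement of what must be proven, not an argument. Transitivity of the Hurwitz action on $\mathrm{Red}_T(\bar c)$ gives no control over the set of common lower bounds of two elements $g,g'\le_T\bar c$; the issue is precisely whether that set has a \emph{unique} maximal element, and nothing in your sketch forces uniqueness. Indeed, the analogous interval for a Coxeter element of a general (infinite) Coxeter group need not be a lattice, so any correct proof must invoke finite type at exactly this point; the only known routes are Bessis's classification-dependent check and Brady--Watt's geometric argument (via the map $w\mapsto\mathrm{Fix}(w)^{\perp}$ into the subspace lattice). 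A smaller remark: your duality trick $x\mapsto x^{-1}\bar c$ is a valid order-reversing involution of $[1,\bar c]_T$ and does reduce joins to meets, but it is also unnecessary --- in a finite bounded poset the existence of all pairwise meets already implies the existence of all joins (the join of $x,y$ is the meet of the finite nonempty set of their common upper bounds). Either way, the burden falls entirely on meets, and that is where your proposal does not close.
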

\begin{proof}
{\bf 1.} The original proof by Bessis [Be, Fact 2.3.1] used the classification of finite reflection groups and was shown case by case. A case free proof is given by [B-W].

{\bf 2.}  To show the lattice property of $M([1,\bar c]_T)$ from that of $[1,\bar c]_T$ is formal (see 
[B-S,D, Be, Theorem 2.3.2]).

{\bf 3.}  It is a general property that a Garside monoid $M$ is embedded into $G(M)$  (see [B-S, Be, Theorem 2.3.2]).
\end{proof}

Since the lattice $[1,\bar c]_T$, for the case of type $A_l$, coincides with  the lattice of {\it non-crossing partitions} (Kreweras \cite{[K]},  Birman-Ko-Lee \cite{[B-K-L]}), the lattice $[1,\bar c]_T$ for the case of type $P$ is called a {\it non-crossing partition lattice of  types} $P$.

\medskip
We are interested in studying the skew-growth function $N_{G^{dual +}_{P},\deg}(t)$ (\ref{skew-growth})  of the dual Artin monoid with respect to the degree map \eqref{degree map}. 
Let us show 

\begin{fact} 
\label{skew=character}
The skew-growth function $N_{G^{dual +}_{P},\deg}(t)$ coincides with the characteristic polynomial $\chi_{[1,\bar c]_T}(t)$ of the non-crossing partition lattice $[1,\bar c]_T$ of type $P$. 
\end{fact}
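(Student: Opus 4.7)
The plan is to unfold the skew-growth formula \eqref{skew-growth}, regroup the summation by the value of the left lcm, and then apply Rota's crosscut theorem in the atomic lattice $[1,\bar c]_T$. Concretely, by Bessis's results recalled just above (in particular the identification $G^{dual+}_P = M([1,\bar c]_T)$ as a Garside lattice monoid), the minimal generator set of $G^{dual+}_P$ is exactly $\mathbf T$, each element of degree $1$; under the bijection $\mathbf T \simeq T$, for every finite subset $J \subset T$ the left lcm $\mathrm{lcm}_l(J)$ computed in the monoid lies in the set of simples $[1,\bar c]_T$ and coincides with the join $\bigvee J$ taken in that lattice, while its degree equals the reflection length $l_T(\bigvee J)$ by homogeneity of the dual relations \eqref{dual relation}. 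Substituting into \eqref{skew-growth} and regrouping by the value of the join,
\[
N_{G^{dual+}_P,\deg}(t) \ =\ \sum_{J \subset T}(-1)^{|J|}\,t^{l_T(\bigvee J)} \ =\ \sum_{x \in [1,\bar c]_T} f(x)\, t^{l_T(x)},
\]
where $f(x) := \sum_{J \subset T,\, \bigvee J = x}(-1)^{|J|}$.

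Next I would identify $f(x) = \mu_{[1,\bar c]_T}(1,x)$ via the crosscut theorem for atomic lattices. Since $[1,\bar c]_T$ is atomic with atom set $T$, summing $f$ over the order ideal below $x$ gives
\[
\sum_{y \le_T x} f(y) \ =\ \sum_{J \subset T \cap [1,x]_T}(-1)^{|J|} \ =\ (1-1)^{\#(T \cap [1,x]_T)} \ =\ \delta_{x,1},
\]
since $x > 1$ forces at least one reflection below $x$. Möbius inversion then yields $f(x) = \mu_{[1,\bar c]_T}(1,x)$, whence
\[
N_{G^{dual+}_P,\deg}(t) \ =\ \sum_{x \in [1,\bar c]_T} \mu_{[1,\bar c]_T}(1,x)\, t^{l_T(x)} \ =\ \chi_{[1,\bar c]_T}(t),
\]
which is the characteristic polynomial in the (unreversed) generating-function convention of \cite{[St]}(3.39).

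The main obstacle is pinning down the dictionary in the first step: the equality $\mathrm{lcm}_l(J) = \bigvee J$ inside $[1,\bar c]_T$, and the equality of the monoid degree with $l_T$ on this interval. Both rest on the Garside structure recalled in the appendix together with the homogeneity of \eqref{dual relation}, and should be stated and checked carefully. A secondary minor point is to verify that $[1,\bar c]_T$ is actually atomic with atom set $T$; this reduces to the standard fact that every $x \in [1,\bar c]_T$ admits a reduced $T$-decomposition $x = r_1 \cdots r_{l_T(x)}$ whose factors are all $\le_T x$, so that $\bigvee (T \cap [1,x]_T) = x$. Beyond these, no further ingredients are required: the identification of the skew-growth function with the characteristic polynomial is then just Möbius inversion.
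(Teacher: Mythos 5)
Your proof is correct, but it takes a different route from the one the paper actually uses for Fact \ref{skew=character}. The paper argues at the level of generating functions: it shows that both $N_{G^{dual+}_P,\deg}(t)$ and $\chi_{[1,\bar c]_T}(t)$ are inverses of the same Poincar\'e series $P_{M(L),\deg}(t)=\sum_{x\in M(L)}t^{\deg(x)}$ in $\Z[[t]]$ --- the first via the monoid-theoretic inversion formula $\bigl(\sum_{J\subset I}lcm(J)\bigr)\cdot\bigl(\sum_{x\in M(L)}x\bigr)=1$ of \cite{[Sa2]}, the second via the Cartier--Foata inversion formula \cite{[C-F]} --- and concludes by uniqueness of inverses. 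You instead prove the stronger term-by-term identity $\sum_{J\subset T,\ \bigvee J=x}(-1)^{\#J}=\mu(\hat 0,x)$ directly by Rota's crosscut theorem in the atomic lattice $[1,\bar c]_T$; this is precisely the ``explicit formula'' that the paper records in the Remark immediately after the Fact, where it is derived as a consequence but noted to admit a direct inductive proof equivalent to your crosscut computation. Your version is more self-contained (it avoids invoking \cite{[Sa2]} Cor.~5.4 and Cartier--Foata), at the cost of having to verify the dictionary you correctly flag: that the atoms of $G^{dual+}_P$ are exactly $\mathbf T$, that $\mathrm{lcm}_l(J)$ computed in the monoid equals $\bigvee J$ in the lattice of simples, that $\deg=l_T$ there, and that $[1,\bar c]_T$ is atomic with atom set $T$; all of these follow from the Garside structure in Bessis's theorems recalled in the appendix, so there is no gap.
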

\begin{proof} Recall that the characteristic polynomial of a finite graded poset $L$ with the minimal element, denoted by $\hat 0$, is the generating function of the M\"obius function:
\[
\begin{array}{c}
\chi_L(t)=\sum_{x\in L}\mu(\hat 0,x)t^{\deg(x)}
\end{array}
\]
(\cite{[St]} (3.39)).  Suppose that $L$ carries a pre-monoid structure and that the poset structure on $L$ is induced from the (left-)divisibility relation of the pre-monoid structure (eg.\ $[1,\bar c]_T$ and $[1,\Delta]_S$, where 1 plays the role of $\hat 0$) and let $I\subset L$ be the minimal generator system of the pre-monoid. We consider the graded ring $\Z[[L]]:=(\Z\cdot M(L))\hat{}$ of the monoid $M(L)$. We note also that $1\in M(L)$  (corresponding to the minimal element of $L$) is the only element whose degree is equal to 0.

% Let us give two alternative poors.

 Inside the algebra $\Z[[L]]$, we obtain the monoid theoretic inversion formula :
\[
\begin{array}{c}
     \big(\sum_{J\subset I} lcm(J)\big) \cdot \big(\sum_{x\in M(L)} x\big) =1.
\end{array}
\]
(\cite{[Sa2]} Cor. 5.4). Applying to this the operator $t^{\deg}$, and by setting  $P_{M(L),\deg}(t):=\sum_{x\in M(L)}t^{\deg(x)}$, we obtain the inversion formula $P_{M(L),\deg}(t)\cdot N_{M(L),\deg}(t)=1$.
On the other hand, Cartier-Foata theory \cite{[C-F]} gives the (combinatorial theoretic) inversion formula $P_{M(L),\deg}(t)\cdot \sum_{x\in L}\mu(\hat 0,x)t^{\deg(x)}=1$. Thus, combining the both inversion formulae, we conclude the equality $\chi_{[1,\bar c]_T}(t)=N_{G_P^{dual +}}(t)$.
\end{proof}
\begin{remark}
The equality $\chi_{[1,\bar c]_T}(t)=N_{G_P^{dual +}}(t)$ implies the explicit formula:
% 2. Rewrite the formula \eqref{skew-growth}.
% \[
% \begin{array}{c}
% N_{M(L), \deg}(t)=\sum_{J\subset I}(-1)^{\# J} t^{\deg(lcm_l(J))}=\sum_{x\in L}(\sum_{J\subset I, lcm_l(J) = x}(-1)^{\# J})t^{\deg(x)}.
% \end{array}
% \]
%Comaring this with the definition of $\chi_L(t)$, we have to show, the explicit formula
\[
\begin{array}{c}
\sum_{J\subset I, lcm_l(J) = x}(-1)^{\# J}=\mu(\hat 0, x).
\end{array}
\]
for all $x \in L$. 
This can be shown directly by induction on $\deg(x)$. 
\end{remark}
\noindent
({\it Proof.} First, for $\deg(x)=0$ the statement is true. Next, by induction hypothesis, we have
$%\[
\sum_{J\subset I, lcm_l(J) \lneqq x}(-1)^{\# J}=\sum_{z \lneqq x}\mu(\hat 0, z).
$% \]
\quad  On the other hand, we have
$% \[
\sum_{J\subset I, lcm_l(J) \leqq x}(-1)^{\# J}=0.
$ %\]
This implies the explicit formula.)

\bigskip
\emph{Acknowledgement.}~
The authors express their gratitudes to James Wallbridge and Mikhail Kapranov for their careful reading of the  manuscript and for their suggestions of improvements.

This researsh was supported by World Premier International Research Center Initiative (WPI Initiative), MEXT, Japan, and by JSPS KAKENHI Grant Number 25247004.

\medskip
\noindent
{\footnotesize
Kavli IPMU (WPI), UTIAS, the University of Tokyo,  Kashiwa, Chiba 277-8583, Japan

% KAVLI INSTITUTE FOR THE PHYSICS AND MATHEMATICS OF THE UNIVERSE (WPI), TODAI INSTITUTES FOR ADVANCED STUDY, THE UNIVERSITY OF TOKYO, 5-1-5 KASHIWA-NO-HA,KASHIWA CITY, CHIBA 277- 8583, JAPAN

E-mail address: chamarims@yahoo.co.jp

E-mail address: kyoji.saito@ipmu.jp
 }

\end{document}